\newtheorem{theorem}{Theorem}[section]
\newtheorem{lemma}[theorem]{Lemma}
\newtheorem{proposition}[theorem]{Proposition}
\theoremstyle{definition}
\newtheorem*{remarks}{Remarks}
\numberwithin{equation}{section}
\newcommand{\uc}[1]{\ensuremath \overset{#1}{\circ}}
\newcommand{\blup}[2]{\ensuremath #1 \sharp #2 \overline{\mathbb{CP}^2}}
\DeclareMathOperator{\divisor}{Div}
\def\sheaf#1{\ensuremath \mathcal#1}
\begin{document}

\title[A surface of general type with $p_g=0$, $K^2=4$, and $\pi_1=\mathbb{Z}/2\mathbb{Z}$]{A complex surface of general type \\ with $p_g=0$, $K^2=4$, and $\pi_1=\mathbb{Z}/2\mathbb{Z}$}

\author{Heesang Park}

\address{Department of Mathematical Sciences, Seoul National University, 599 Gwanak-ro, Gwanak-gu, Seoul 151-747, Korea}

\email{hspark@math.snu.ac.kr}

\date{October 17, 2009; revised at November 2, 2009}

\subjclass[2000]{Primary 14J29; Secondary 14J10, 14J17, 53D05}

\keywords{$\mathbb{Q}$-Gorenstein smoothing, rational blow-down surgery, surface of general type}

\begin{abstract}
We construct a minimal complex surface of general type with $p_g=0$, $K^2 =4$, and $\pi_1=\mathbb{Z}/2\mathbb{Z}$ using a rational blow-down surgery and a $\mathbb{Q}$-Gorenstein smoothing theory. In a similar fashion, we also construct a symplectic $4$-manifold with $b_2^+=1$, $K^2=5$, and $\pi_1=\mathbb{Z}/2\mathbb{Z}$.
\end{abstract}

\maketitle

\section{Introduction}

In this paper we construct a new minimal complex surface of general type with $p_g=0$ and $K^2=4$. It is a fundamental problem in the classification of complex surfaces to find a minimal complex surface of general type with $p_g=0$. Recently simply connected complex surfaces of general type with $p_g=0$ and $K^2 \le 4$ are constructed; Y. Lee and J. Park~\cite{Lee-Park-K^2=2}, the author-J. Park-D. Shin~\cite{PPS-K3, PPS-K4}. Also many families of non-simply connected complex surfaces of general type with $p_g=0$ have been constructed; cf.~BHPV~\cite[VII]{BHPV}.

It is nevertheless an intriguing problem to find complex surfaces of general type with $p_g=0$ and small nonzero fundamental groups, especially, $\pi_1 = \mathbb{Z}/2\mathbb{Z}$ (the smallest nonzero group), because there are no known examples with $K^2 \ge 4$ and $\pi_1 = \mathbb{Z}/2\mathbb{Z}$. For instance, the first example with $p_g=0$, $K^2=1$, and $\pi_1 = \mathbb{Z}/2\mathbb{Z}$ was constructed by Barlow~\cite{Barlow}. It is very recent that examples with $p_g=0$, $K^2 > 1$, and $\pi_1 = \mathbb{Z}/2\mathbb{Z}$ are constructed: D. Cartwright and T. Steger~\cite{Cartwright-Steger} constructed complex surfaces of general type with $p_g=0$, $K^2=3$, and $\pi_1 = \mathbb{Z}/2\mathbb{Z}$. J. Keum and Y. Lee~\cite{Keum-Lee} constructed complex surfaces of general type with $p_g=0$, $K^2=1,2,3$, and $\pi_1=\mathbb{Z}/2\mathbb{Z}$. However it is not known yet whether there are complex surfaces of general type with $p_g=0$, $K^2=4$, and $\pi_1 = \mathbb{Z}/2\mathbb{Z}$.

Motivated by the work of J. Keum and Y. Lee~\cite{Keum-Lee}, we extend their result to the $K^2=4$ case in this paper. The main result of this paper is the following theorem.

\begin{theorem}
\label{thm-main}
There exists a minimal complex surface of general type with $p_g=0$, $K^2=4$, and $\pi_1=\mathbb{Z}/2\mathbb{Z}$.
\end{theorem}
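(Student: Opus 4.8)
The plan is to follow the method of Y.\ Lee and J.\ Park used in the cited works: construct a normal projective surface $Z$ carrying a small number of quotient singularities of class $T$ (Wahl singularities $\tfrac{1}{n^2}(1,na-1)$), prove that $Z$ admits a global $\mathbb{Q}$-Gorenstein smoothing $\mathcal{Z}\to\Delta$, and take the desired surface to be a general fiber $X_t$. Topologically $X_t$ is obtained from a rational surface by rational blow-down surgeries along the linear chains of rational curves that contract to the singular points of $Z$, which is where the rational blow-down and $\mathbb{Q}$-Gorenstein smoothing of the abstract enter.

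\textbf{Step 1 (the singular model).} Starting from a rational surface --- a blow-up of $\mathbb{CP}^2$, most conveniently carrying an elliptic pencil so that sections and components of singular fibers are available as rational curves --- I would produce several pairwise disjoint linear chains of smooth rational curves whose dual graphs are exactly the resolution graphs of Wahl singularities, and contract all of them to obtain $Z$ with exceptional points $p_1,\dots,p_k$. The number of blow-ups is balanced against the chain lengths $r_i$ so that $\chi(\mathcal{O}_Z)=1$ (hence $p_g(Z)=q(Z)=0$) and $K_Z^2=4$, using $K_{X_t}^2=K_{\widetilde Z}^2+\sum_i r_i$ where $\widetilde Z\to Z$ is the minimal resolution; one also arranges $K_Z$ to be ample.

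\textbf{Step 2 ($\mathbb{Q}$-Gorenstein smoothing and invariants).} Each class $T$ singularity has a local $\mathbb{Q}$-Gorenstein smoothing, and the obstruction to patching these into a global one lies in $H^2(Z,T_Z)$, equivalently in the second cohomology of the relevant sheaf on $\widetilde Z$. Using the explicit geometry of the rational surface and of the chains, as in the cited papers, I would show this obstruction space vanishes, so that $\mathcal{Z}\to\Delta$ exists with smooth general fiber $X_t$. Since $\chi$, $K^2$ and $p_g$ are deformation invariant in such families, $X_t$ has $p_g=0$, $q=0$, $K_{X_t}^2=4$; and since ampleness of the canonical class is open in the family and $K_Z$ is ample, $X_t$ is a minimal complex surface of general type.

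\textbf{Step 3 (the fundamental group, the main obstacle).} Write $X_t=\bigl(Z\setminus\{p_1,\dots,p_k\}\bigr)\cup(M_1\cup\dots\cup M_k)$, where $M_i$ is the Milnor fiber of the smoothing of $p_i$ and $Z\setminus\{p_i\}$ deformation retracts onto $\widetilde Z$ with open tubular neighborhoods of the chains removed. Van Kampen's theorem then presents $\pi_1(X_t)$ with generators the meridians of the curves $C_{i,j}$ together with $\pi_1$ of the complement of the chains, and relations coming from the rational surface and from $\pi_1(M_i)=\mathbb{Z}/n_i$ (the Milnor fiber of $\tfrac{1}{n^2}(1,na-1)$ being a rational homology ball with cyclic fundamental group). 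The configuration must be chosen with enough precision that exactly one singularity has $n_i=2$, the meridian of the corresponding $(-4)$-chain has order $4$ in the complement, and the sections and multiple fiber components trivialize every remaining generator; then the van Kampen pushout $\mathbb{Z}/4\ast_{\mathbb{Z}/4}\mathbb{Z}/2$ collapses to give $\pi_1(X_t)=\mathbb{Z}/2\mathbb{Z}$ (in particular $X_t$ is not rational, as it must be). I expect this to be the crux: the rational-surface relations together with the Milnor-fiber relations must collapse $\pi_1$ to \emph{exactly} $\mathbb{Z}/2\mathbb{Z}$, neither to the trivial group nor to something larger, and in practice one first computes $H_1(X_t)=\mathbb{Z}/2\mathbb{Z}$ homologically and then upgrades to $\pi_1$ by exhibiting explicit loops in the complement of the chains and killing them with the fibration structure.
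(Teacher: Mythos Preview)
Your overall architecture (find Wahl chains, contract, show the obstruction vanishes, smooth, compute $\pi_1$ via Milnor fibers and van Kampen) is the right template, but you have chosen the wrong seed surface, and this is exactly where Step~3 breaks down. You propose to build $Z$ from a blow-up of $\mathbb{CP}^2$, i.e.\ a simply connected rational surface, and then to \emph{create} a $\mathbb{Z}/2$ by inserting a rational ball with $\pi_1=\mathbb{Z}/2$ (a $(-4)$-curve, $n=2$). For that to yield $\pi_1(X_t)=\mathbb{Z}/2$ you need the complement $Z_0$ of the chains to have $\pi_1(Z_0)\cong\mathbb{Z}/4$ generated precisely by the meridian of the $(-4)$-curve, and you need all other Milnor-fiber relations and fibration relations to leave exactly this $\mathbb{Z}/2$ quotient intact. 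You acknowledge this is ``the crux,'' but you give no configuration that realizes it; in the known rational-surface constructions with $K^2\le 4$ the same van Kampen analysis collapses $\pi_1$ to the trivial group, and there is no evident mechanism by which a single $(-4)$ chain survives as $\mathbb{Z}/2$ while everything else dies.

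The paper sidesteps this difficulty entirely by starting not from a rational surface but from an \emph{Enriques surface} $Y$ (with a particular elliptic fibration having an $I_9$ fiber, a nodal fiber, and two bisections). Since $\pi_1(Y)=\mathbb{Z}/2$ already, the goal becomes to \emph{preserve} rather than create the fundamental group. After $15$ blow-ups one finds two disjoint Wahl chains with $n=73$ and $n=19$; because both are odd and the two normal circles lie on a common $(-1)$-sphere, the van Kampen amalgams $\mathbb{Z}/2 \ast_{\mathbb{Z}/73^2}\mathbb{Z}/73$ and then $\mathbb{Z}/2 \ast_{\mathbb{Z}/19^2}\mathbb{Z}/19$ are both $\mathbb{Z}/2$. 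The price is that the obstruction-vanishing argument is no longer the standard rational-surface one: one must pass to the K3 double cover $\overline{Y}\to Y$, pull back the configuration, and prove $H^0(\Omega_{\overline V}(\log\Delta)(K_{\overline V}))=0$ there (via the Chern-class injectivity of the residue connecting map), then push down. So both the input surface and the obstruction argument differ from what you outlined; the Enriques starting point is the missing idea.
\medskip
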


The key ingredient of this paper is to construct a surface $Z$ with special configurations of rational curves by appropriately blowing-up several times starting with an Enriques surface. And then we apply a similar method developed in Y. Lee and J. Park~\cite{Lee-Park-K^2=2} to the surface $Z$. That is, we contract the special chains of $\mathbb{CP}^1$'s from the surface $Z$ so that we get a surface $X$ with permissible singular points. We prove that there is a global $\mathbb{Q}$-Gorenstein smoothing of the singular surface $X$. For this we show that the obstruction to a global $\mathbb{Q}$-Gorenstein smoothing is zero by using a similar strategy as in J. Keum and Y. Lee~\cite{Keum-Lee}. Then a general fiber of a global $\mathbb{Q}$-Gorenstein smoothing of the singular surface $X$ is a complex surface of general type with $p_g=0$ and $K^2=4$. Finally we show that the surface has $\pi_1 = \mathbb{Z}/2\mathbb{Z}$ by applying a Milnor fiber theory and a rational blow-down surgery.

We also construct a symplectic $4$-manifold with $b_2^+=1$, $K^2=5$, and $\pi_1=\mathbb{Z}/2\mathbb{Z}$ using the same technique.

\begin{theorem}
\label{thm-main2}
There exists a symplectic $4$-manifold with $b_2^+=1$, $K^2 =5$, and $\pi_1 = \mathbb{Z}/2\mathbb{Z}$.
\end{theorem}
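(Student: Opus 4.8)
The plan is to mimic the construction of Theorem~\ref{thm-main} but to stop one blow-up earlier (or to choose a slightly different configuration of exceptional curves), trading the last $\mathbb{Q}$-Gorenstein smoothing step—which forces complex structure—for a rational blow-down surgery that only guarantees a symplectic structure. Concretely, I would start again from an Enriques surface, perform the same sequence of blow-ups to produce a rational surface $Z'$ carrying disjoint linear chains of $\mathbb{CP}^1$'s whose self-intersections match the resolution graphs of the cyclic quotient singularities of class $T$ used before, but arranged so that the resulting count gives $K^2 = 5$ rather than $4$. Blowing down these chains (equivalently, rationally blowing down the corresponding configurations inside a closed $4$-manifold) produces a symplectic $4$-manifold $Y$; Symington's theorem that rational blow-down preserves the symplectic category supplies the symplectic structure, and the usual Euler-characteristic/signature bookkeeping gives $b_2^+ = 1$ and $K_Y^2 = 5$.

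The key steps, in order, are: (1) write down the explicit blow-up sequence on the Enriques surface and verify that the exceptional configuration consists of disjoint chains each of which is the linear plumbing $C_{p,q}$ appearing in the rational blow-down (this is the analogue of producing the singular surface $X$, but now we keep everything smooth and symplectic); (2) compute $c_1^2$ and $c_2$ of $Z'$ and of each plumbed piece $C_{p,q}$, then use additivity of the signature and Euler characteristic under rational blow-down to get $b_2^+(Y) = 1$ and $K_Y^2 = 5$; (3) invoke Symington~\cite{} — here I would cite the relevant rational-blow-down-is-symplectic result — to conclude $Y$ is symplectic with symplectic canonical class; (4) compute $\pi_1(Y)$ by van Kampen, using that each $C_{p,q}$-neighborhood is replaced by a rational homology ball $B_{p,q}$ with $\pi_1(B_{p,q}) = \mathbb{Z}/p\mathbb{Z}$ and $\pi_1(\partial B_{p,q}) = \mathbb{Z}/p^2\mathbb{Z}$, tracking exactly how much of $\pi_1$ of the complement survives, just as in the $K^2=4$ argument, to land on $\mathbb{Z}/2\mathbb{Z}$.

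The main obstacle I expect is step~(4): the fundamental-group computation. One must choose the blow-up centers on the Enriques surface so that the complement of the chains in $Z'$ already has the "right" fundamental group and so that the surgery tori/lens-space boundaries kill and create exactly the intended relations — the Enriques surface contributes a $\mathbb{Z}/2\mathbb{Z}$ from its torsion, and the rational blow-downs must interact with it so that precisely a single $\mathbb{Z}/2\mathbb{Z}$ remains. This is delicate because van Kampen for these plumbing complements is sensitive to how the chains meet sections and fibers, and a careless configuration can easily yield $\pi_1 = 1$ or a larger group. A secondary (but routine) point is verifying that the $K^2 = 5$ arithmetic actually closes up — that the total self-intersection absorbed by the chains equals the number of blow-ups minus $5 - K^2_{\text{Enriques}}$ — which just amounts to the plumbing-graph computation in step~(2). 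Once the configuration is pinned down, minimality and the symplectic Kodaira-dimension remarks follow from the standard criteria (e.g. that $K_Y$ has positive square and $Y$ is not rational or ruled), so I would not dwell on them.
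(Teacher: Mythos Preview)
Your proposal is correct and follows essentially the same route as the paper: blow up the Enriques surface $Y$ (the paper does it twelve times, producing the two chains $C_{4,1}$ and $C_{151,31}$), rationally blow down, cite Symington for the symplectic structure, and repeat the van Kampen argument of Proposition~\ref{proposition:pi_1=Z_2} to get $\pi_1 = \mathbb{Z}/2\mathbb{Z}$. One slip worth flagging: you call $Z'$ a ``rational surface,'' but it is a blown-up Enriques surface, not rational---you clearly know this, since you correctly trace the surviving $\mathbb{Z}/2\mathbb{Z}$ back to $\pi_1(Y)$, but the distinction is exactly what makes step~(4) work.
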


\subsubsection*{Acknowledgements}

The author would like to thank Professor Jongil Park for encouraging and guiding me during the course of this work. The author would also like to thank Professor Yongnam Lee for kind explanation of his paper, Keum-Lee~\cite{Keum-Lee} and some valuable comments on the first draft of this article, and Professor Dongsoo Shin for valuable discussions and drawing figures in this paper.

\section{Main construction}\label{section:Construction}

We start with a special elliptic fibration on an Enriques surface. According to Kondo~\cite{Kondo}, there is an Enriques surface $Y$ with an elliptic fibration over $\mathbb{P}^1$. In particular the Enriques surface $Y$ has an $I_9$-singular fiber and a nodal singular fiber $F$ which are not multiple fibers and two bisections $S_1$ and $S_2$ not passing through the node of the nodal fiber $F$. The configuration of the fibers and sections are as in Figure~\ref{figure:Y}.

\begin{figure}[hbtb]
\centering
\includegraphics{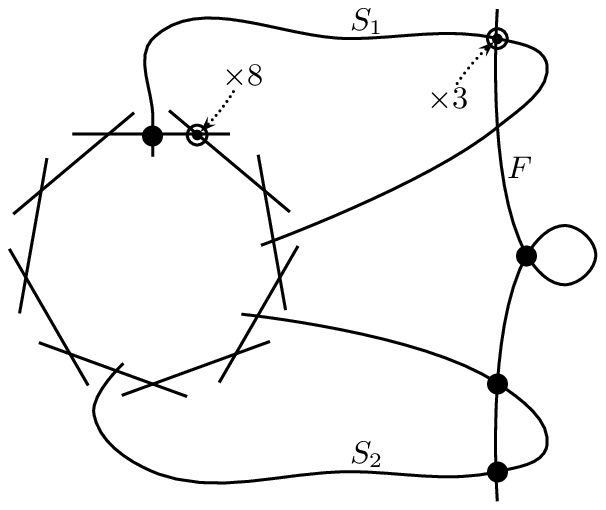}
\caption{An Enriques surface $Y$}
\label{figure:Y}
\end{figure}

We blow up four times totally at the four marked points $\bullet$. We blow up again three times and eight times at the two marked points $\bigodot$, respectively. We then get a surface $Z = \blup{Y}{15}$; Figure~\ref{figure:Z}. There exist two disjoint linear chains of $\mathbb{CP}^1$'s in $Z$:
\begin{align*}
&C_{19,13}: \uc{-2}-\uc{-2}-\uc{-9}-\uc{-2}-\uc{-2}-\uc{-2}-\uc{-2}-\uc{-4} \\
&C_{73,50}: \uc{-2}-\uc{-2}-\uc{-7}-\uc{-6}-\uc{-2}-\uc{-3}-\uc{-2}-\uc{-2}-\uc{-2}-\uc{-2}-\uc{-4}
\end{align*}

\begin{figure}[hbtb]
\centering
\includegraphics{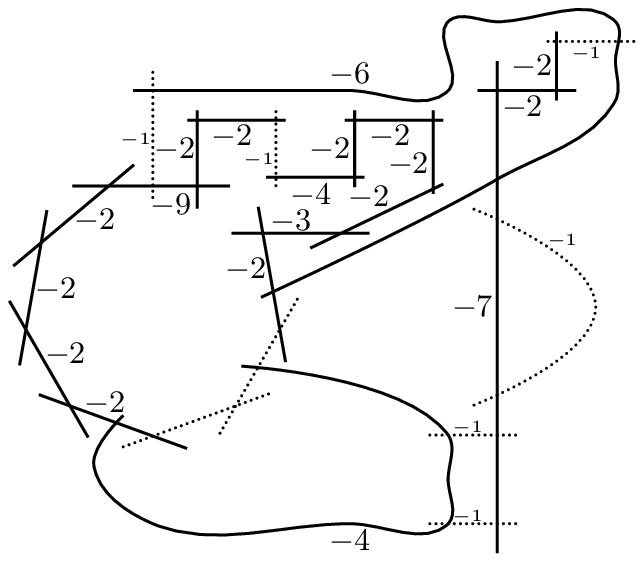}
\caption{A surface $Z = \blup{Y}{15}$}
\label{figure:Z}
\end{figure}

By applying $\mathbb{Q}$-Gorenstein smoothing theory to the surface  $Z$ as in~\cite{Lee-Park-K^2=2, PPS-K3, PPS-K4}, we construct a complex surface of general type with $p_g=0$ and $K^2=4$. That is, we first contract the two chains of $\mathbb{CP}^1$'s from the surface $Z$ so that it produces a normal projective surface $X$ with two permissible singular points. In Section~\ref{section:Q-Gorenstein} we will show that the singular surface $X$ has a global $\mathbb{Q}$-Gorenstein smoothing. Let $X_t$ be a general fiber of the $\mathbb{Q}$-Gorenstein smoothing of $X$. Since $X$ is a singular surface with $p_g=0$ and $K^2=4$, by applying general results of complex surface theory and a $\mathbb{Q}$-Gorenstein smoothing theory, one may conclude that a general fiber $X_t$ is a complex surface with $p_g=0$ and $K^2=4$. Furthermore, it is not difficult to show that a general fiber $X_t$ is minimal by using a similar technique in~\cite{Lee-Park-K^2=2, PPS-K3, PPS-K4}; hence $X_t$ is of general type. Finally it remains to show that $\pi_1(X_t)=\mathbb{Z}/2\mathbb{Z}$.

\begin{proposition}\label{proposition:pi_1=Z_2}
$\pi_1(X_t)=\mathbb{Z}/2\mathbb{Z}$.
\end{proposition}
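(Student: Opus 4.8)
The plan is to realize $X_t$ as a rational blow-down of $Z$ and then apply the Seifert--van Kampen theorem. Since $Z=\blup{Y}{15}$ is obtained from the Enriques surface $Y$ by a sequence of blow-ups, $\pi_1(Z)\cong\pi_1(Y)\cong\mathbb{Z}/2\mathbb{Z}$. The two singular points of $X$ are produced by contracting $C_{19,13}$ and $C_{73,50}$, and a continued-fraction computation identifies these chains with the resolution graphs of the cyclic quotient singularities $\tfrac{1}{19^{2}}(1,19\cdot 13-1)$ and $\tfrac{1}{73^{2}}(1,73\cdot 50-1)$; in particular both are singularities of class $T$ with $d=1$ (Wahl singularities). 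Hence the global $\mathbb{Q}$-Gorenstein smoothing of $X$ restricts near each singular point to the standard $\mathbb{Q}$-Gorenstein smoothing of $\tfrac{1}{n^{2}}(1,na-1)$, whose Milnor fibre is the rational homology ball $B_{n,a}$ with $\pi_1(B_{n,a})\cong\mathbb{Z}/n\mathbb{Z}$ and with $\partial B_{n,a}=L(n^{2},na-1)$; moreover the inclusion $\partial B_{n,a}\hookrightarrow B_{n,a}$ induces the natural surjection $\mathbb{Z}/n^{2}\mathbb{Z}\to\mathbb{Z}/n\mathbb{Z}$. Writing $N_1,N_2\subset Z$ for disjoint tubular neighbourhoods of $C_{19,13},C_{73,50}$ and $M_0=Z\setminus\operatorname{int}(N_1\cup N_2)$, it follows as in \cite{Lee-Park-K^2=2} that $X_t$ is diffeomorphic to $M_0\cup_{\partial N_1}B_{19,13}\cup_{\partial N_2}B_{73,50}$, i.e.\ to the rational blow-down of $Z$ along the two chains.

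Next I would run van Kampen twice. Each $N_k$ is a plumbing along a chain of $2$-spheres, hence simply connected, so $\pi_1(Z)=\pi_1(M_0)/\langle\langle\mu_1,\mu_2\rangle\rangle$, where $\mu_k$ is a generator of $\pi_1(\partial N_k)\cong\mathbb{Z}/n_k^{2}\mathbb{Z}$ (here $n_1=19$, $n_2=73$); thus $\pi_1(M_0)/\langle\langle\mu_1,\mu_2\rangle\rangle\cong\mathbb{Z}/2\mathbb{Z}$. Since $\pi_1(B_{n_k,a_k})\cong\mathbb{Z}/n_k\mathbb{Z}$ is generated by the image of $\mu_k$, with kernel generated by $\mu_k^{\,n_k}$, the same theorem gives $\pi_1(X_t)=\pi_1(M_0)/\langle\langle\mu_1^{\,19},\mu_2^{\,73}\rangle\rangle$. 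Therefore it suffices to prove that $\mu_1$ and $\mu_2$ are already trivial in $\pi_1(M_0)$: then $\pi_1(X_t)=\pi_1(M_0)=\pi_1(Z)\cong\mathbb{Z}/2\mathbb{Z}$. (This is in fact unavoidable: the image of $\mu_k$ in $\pi_1(M_0)$ is cyclic of order dividing $n_k^{2}$, and since $n_1,n_2$ are odd primes whereas $\pi_1(X_t)$ is to be $\mathbb{Z}/2\mathbb{Z}$, that order must equal $1$.)

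The heart of the argument is thus the computation of $\pi_1(M_0)$ from the explicit configuration of Figure~\ref{figure:Z}. I would organise it around the elliptic fibration $Z\to\mathbb{P}^{1}$ inherited from $Y$: a regular neighbourhood of the union of the chain curves with the remaining marked curves retracts onto a $2$-complex whose fundamental group can be written down, giving a presentation of $\pi_1(M_0)$ with generators the meridians of the removed curves, a loop on a general fibre, and the order-two loop carried by the Enriques structure, subject to relations coming from the singular fibres $I_9$ and $F$ and from the bisections $S_1,S_2$. The geometric input that forces $\mu_k=1$ is the following: for each $k$ some rational curve in $Z$ visible in the figure (a (bi)section, or an exceptional curve of one of the blow-ups) meets an \emph{end} curve of the chain $C_k$ transversally in a single point and is otherwise disjoint from $N_1\cup N_2$; deleting that point turns this curve into a disk in $M_0$ bounding the meridian of the end curve. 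For a linear chain the meridian of an end curve generates the fundamental group of the boundary lens space, hence it generates $\pi_1(\partial N_k)$, so $\mu_k$ — being another generator of the same cyclic group — is trivial in $\pi_1(M_0)$.

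I expect the main obstacle to be exactly this last computation: extracting from Figure~\ref{figure:Z} a deformation retract of the open manifold $M_0$ explicit enough to yield a presentation, keeping track of all the curves and the monodromy of the elliptic fibration, and exhibiting the bounding disk for each end-curve meridian. By comparison the lens-space combinatorics (that an end-curve meridian generates) and the concluding group theory are routine, and the inequality $\pi_1(X_t)\neq 1$ needed to rule out the trivial group is already guaranteed by $\pi_1(Z)\cong\mathbb{Z}/2\mathbb{Z}$ (and can be confirmed independently by the étale double cover of $X_t$ obtained by running the whole construction over the K3 double cover of $Y$).
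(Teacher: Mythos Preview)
Your framework matches the paper's: identify $X_t$ with the rational blow-down via Milnor fibre theory, decompose $Z$ and the blow-down along the boundaries of neighbourhoods of the chains, and run van Kampen. The reduction to showing that the meridians $\mu_1,\mu_2$ already vanish in $\pi_1(M_0)$ (your $M_0$ is the paper's $Z_0$) is likewise the same, and your final amalgamated-product computation agrees with the paper's.

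The divergence is in how the vanishing of $\mu_1,\mu_2$ is established. You propose to find, for each chain \emph{separately}, a rational curve in $Z$ meeting an end curve once and otherwise disjoint from both neighbourhoods, giving a disk in $M_0$ that caps the end-curve meridian. The paper instead observes that a \emph{single} $(-1)$-sphere in $Z$ (Figure~\ref{figure:Z-normal-circles}) meets a curve of $C_{73,50}$ and the $(-4)$-end of $C_{19,13}$ once each; deleting the two intersection points turns it into a cylinder in $Z_0$, so the two normal circles $\alpha,\beta$ represent conjugate elements of $\pi_1(Z_0)$ and hence have the same order. Since that common order divides both $73^{2}$ and $19^{2}$ and $\gcd(73,19)=1$, it equals $1$, whence $\pi_1(Z_0)=\mathbb{Z}/2\mathbb{Z}$ immediately.

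Thus the coprimality of $73$ and $19$, which you mention only parenthetically as an a posteriori consistency check, is the linchpin of the paper's argument. Your route would work if the two separate bounding disks exist, but the configuration of Figure~\ref{figure:Z} does not obviously supply a rational curve that hits an end of one chain while avoiding the other; the visible $(-1)$-curve is precisely the one that links the two chains. The paper's trick turns this apparent obstacle into the mechanism of the proof, replacing your proposed presentation-level computation of $\pi_1(M_0)$ by a one-line arithmetic observation.
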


\begin{proof}
Let $Z_{73, 19}$ be a rational blow-down $4$-manifold obtained from $Z$ by replacing two disjoint configurations $C_{73,50}$ and $C_{19,13}$ with the corresponding rational balls $B_{73,50}$ and $B_{19,13}$, respectively. Then, since a general fiber $X_t$ of a $\mathbb{Q}$-Gorenstein smoothing of $X$ is diffeomorphic to the rational blow-down $4$-manifold $Z_{73,19}$ by a Milnor fiber theory, we have $\pi_1(X_t)=\pi_1(Z_{73,19})$. Hence it suffices to show that $\pi_1(Z_{73,19}) = \mathbb{Z}_2$.

We first decompose the surface $Z$ into
\begin{equation*}
Z = Z_0 \cup \{ C_{73,50} \cup C_{19,13}\}.
\end{equation*}
Then the $4$-manifold $Z_{73,19}$ can be decomposed into
\begin{equation*}
Z_{73,19} = Z_0 \cup \{B_{73,50} \cup B_{19,13}\}.
\end{equation*}
Let $\alpha$ and $\beta$ be normal circles of disk bundles $C_{73,50}$ and $C_{19,13}$ over the $(-2)$-curve and the $(-4)$-curve, respectively; Figure~\ref{figure:Z-normal-circles}.

\begin{figure}[hbtb]
\centering
\includegraphics{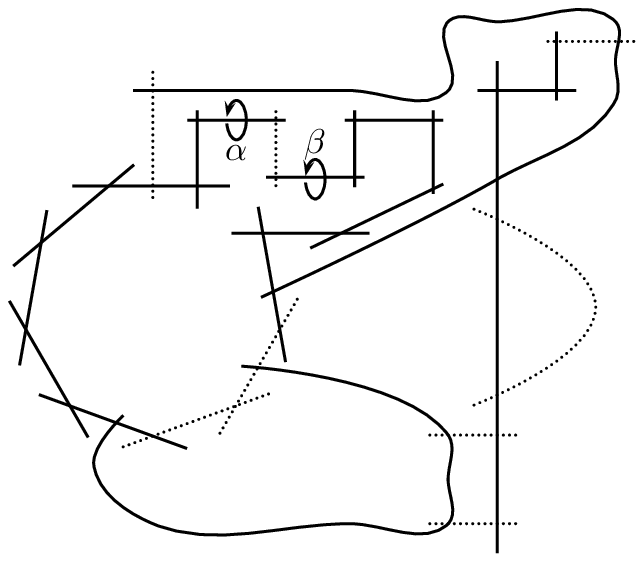}
\caption{Two normal circles on a surface $Z$}
\label{figure:Z-normal-circles}
\end{figure}

Note that $\pi_1(Z) = \mathbb{Z}_2$ for $\pi_1(Y) = \mathbb{Z}_2$. Since $C_{73,50}$ and $C_{19,13}$ is simply connected, by Van Kampen theorem, we have
\begin{equation*}
\mathbb{Z}_2 = \pi_1(Z) = \pi_1(Z_0)/\langle N_{\gamma_1 i_{\ast}(\alpha) \gamma_1^{-1}}, N_{\gamma_2 j_{\ast}(\beta) \gamma_2^{-1}}\rangle,
\end{equation*}
where $i_{\ast} : \mathbb{Z}_{73^2} \to \pi_1(Z_0)$ and $j_{\ast}: \mathbb{Z}_{19^2} \to \pi_1(Z_0)$ are the induced homomorphisms by the inclusions $i : C_{73,50} \cap Z_0 \to Z_0$ and $j: C_{19,13} \cap Z_0 \to Z_0$, respectively, and $\gamma_1$ (or $\gamma_2$) is a path connecting $\alpha$ (or $\beta$) and the reference point, respectively. Since two normal circles $\alpha$ and $\beta$ lie on the $(-1)$-sphere as in Figure~\ref{figure:Z-normal-circles}, $\gamma_1 i_{\ast}(\alpha) \gamma_1^{-1}$ and $\gamma_2 j_{\ast}(\beta) \gamma_2^{-1}$ have the same order in $\pi_1(Z_0)$. However, since two integers $73$ and $19$ are relatively prime, we have $\langle N_{\gamma_1 i_{\ast}(\alpha) \gamma_1^{-1}}, N_{\gamma_2 j_{\ast}(\beta) \gamma_2^{-1}}\rangle = 1$. Therefore
\begin{equation*}
\pi_1(Z_0) = \mathbb{Z}_2.
\end{equation*}

We now consider $\pi_1(Z_0 \cup B_{73,50})$. Note that the map $\pi_1(\partial B_{73,50}) \to \pi_1(Z_0)$ is given by
\begin{align*}
\mathbb{Z}_{73^2} \to \mathbb{Z}_2, \quad \overline{1} \mapsto \overline{0}
\end{align*}
and the map $\pi_1(\partial B_{73,50}) \to \pi_1(B_{73,50})$ is given by
\begin{equation*}
\mathbb{Z}_{73^2} \to \mathbb{Z}_{73}, \quad \overline{1} \mapsto \overline{1}.
\end{equation*}
Therefore we have
\begin{equation*}
\pi_1(Z_0 \cup B_{73,50}) = \pi_1(Z_0) \underset{\pi_1(\partial B_{73,50})}{\ast} \pi_1(B_{73,50}) = \mathbb{Z}_2 \underset{\mathbb{Z}_{73^2}}{\ast} \mathbb{Z}_{73} = \mathbb{Z}_2.
\end{equation*}
Similarly, we can conclude that
\begin{equation*}
\pi_1(Z_{73,19}) = \pi_1(Z_0 \cup B_{73,50}) \underset{\pi_1(\partial B_{19,13})}{\ast} \pi_1(B_{19,13}) = \mathbb{Z}_2 \underset{\mathbb{Z}_{19^2}}{\ast} \mathbb{Z}_{19} = \mathbb{Z}_2. \qedhere
\end{equation*}
\end{proof}

\section{Existence of a global $\mathbb{Q}$-Gorenstein smoothing}\label{section:Q-Gorenstein}

This section is devoted to a proof of the following theorem.

\begin{theorem}\label{theorem:obstruction=0}
The singular surface $X$ has a global $\mathbb{Q}$-Gorenstein smoothing.
\end{theorem}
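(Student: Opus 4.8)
The plan is to follow the now-standard two-step strategy for producing global $\mathbb{Q}$-Gorenstein smoothings of surfaces obtained by contracting chains of rational curves, as in Lee--Park~\cite{Lee-Park-K^2=2} and especially Keum--Lee~\cite{Keum-Lee}. First I would recall that the two singular points of $X$ are cyclic quotient singularities of class $T$ (the chains $C_{73,50}$ and $C_{19,13}$ are precisely of the type that contract to singularities of the form $\frac{1}{n^2}(1,na-1)$), so each admits a local $\mathbb{Q}$-Gorenstein smoothing; the existence of a \emph{global} smoothing is governed by the obstruction space, and the deformations are unobstructed as soon as $H^2$ of the relevant sheaf of first-order deformations vanishes. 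Concretely, writing $\pi\colon Z\to X$ for the contraction, one considers the sheaves $\mathcal{T}_X^0$, $\mathcal{T}_X^1$, $\mathcal{T}_X^2$ and the local-to-global spectral sequence; since the singularities are local complete intersections in the $\mathbb{Q}$-Gorenstein sense and are smoothable locally, $\mathcal{T}_X^2=0$ and the local contributions $H^0(\mathcal{T}_X^1)$ are handled by the local smoothings, so the only obstruction to gluing these into a global deformation lies in $H^2(X,\mathcal{T}_X^0)$.

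The key step is therefore to show $H^2(X,\mathcal{T}_X^0)=0$. I would do this by pulling back to the smooth surface $Z$ and comparing $\mathcal{T}_X^0$ with $\Theta_Z(-\log \Delta)$ or, more precisely, with the sheaf $\mathcal{T}_Z(-\sum C_i)$ associated to the exceptional chains, exactly as in Keum--Lee. By Serre duality on $X$ (or on $Z$ after resolving), $H^2(X,\mathcal{T}_X^0)$ is dual to $H^0$ of $\Omega_X^1 \otimes \omega_X$ (suitably interpreted for the quotient singularities), and one shows this vanishes by an explicit cohomological computation: one uses the birational morphism $Z\to Y$ from the blown-up Enriques surface, tracks how $K$ and the logarithmic differentials behave under the fifteen blow-ups, and invokes that on the Enriques surface $Y$ the analogous cohomology group vanishes because $p_g(Y)=0$ and the elliptic fibration structure constrains $H^0(Y,\Omega_Y^1\otimes\omega_Y)$. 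The relative primality of $73$ and $19$ and the specific self-intersection numbers in the two chains are what make the numerical bookkeeping come out in one's favor, so this is where I would be most careful.

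Once $H^2(X,\mathcal{T}_X^0)=0$ is established, the formal smoothing exists by deformation theory (the obstruction vanishes at each step of extending over the Artinian base), and Artin approximation (or the fact that $X$ is projective with $H^2(\mathcal{O}_X)=0$, so deformations are algebraizable) upgrades the formal $\mathbb{Q}$-Gorenstein smoothing to an actual one over a smooth curve germ. This yields the flat family $X\leadsto X_t$ with $\mathbb{Q}$-Gorenstein total space, completing the proof of Theorem~\ref{theorem:obstruction=0}.

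The main obstacle I anticipate is the vanishing $H^2(X,\mathcal{T}_X^0)=0$ itself: translating between the orbifold tangent sheaf on $X$ and computable sheaves on $Z$ (or on $Y$) requires care with the logarithmic correction terms at the exceptional curves, and one must verify that the chosen configuration of fifteen blow-ups does not accidentally create sections of the dual sheaf. I would expect the argument to reduce, as in Keum--Lee, to checking that a certain effective divisor built from the exceptional curves and $K_Z$ has no sections, which in turn follows from $p_g=0$ together with the rigidity of the rational curves involved; but making this reduction precise — rather than the final vanishing, which is then nearly automatic — is the delicate part.
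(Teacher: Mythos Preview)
Your high-level outline is correct and matches the paper's: the singularities are of class $T$, they smooth locally, and the global obstruction lives in $H^2(X,\mathcal{T}_X^0)$, which by Lee--Park's criterion reduces to showing $H^2(T_Z(-\log A))=0$, equivalently $H^0(\Omega_Z^1(\log A)(K_Z))=0$. The paper also uses the Flenner--Zaidenberg blow-up invariance to pass from $Z$ (fifteen blow-ups) down to $V=Y\sharp\overline{\mathbb{CP}^2}$ (a single blow-up at the node of $F$), so the fifteen blow-ups need not be tracked individually.

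But your proposed mechanism for the actual vanishing has a genuine gap. First, the coprimality of $73$ and $19$ plays \emph{no role} in the obstruction vanishing; it is used only in computing $\pi_1(X_t)$ in Proposition~\ref{proposition:pi_1=Z_2}. Second, and more importantly, you propose to argue directly on the Enriques surface via $p_g(Y)=0$ and ``rigidity of the rational curves,'' but this does not handle the $K_V$-twist: on an Enriques surface $K_Y$ is $2$-torsion and nontrivial, so $p_g=0$ by itself does not control $H^0(\Omega_V^1(\log D)(K_V))$. The essential idea you have omitted, which is the heart of the Keum--Lee argument, is to pass to the unramified K3 double cover $\psi:\overline{V}\to V$. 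On the K3 side $K_{\overline{V}}=\overline{E}_1+\overline{E}_2$ is effective and exceptional, so two applications of the Esnault--Viehweg residue sequence reduce $H^0(\Omega_{\overline{V}}(\log\Delta)(K_{\overline{V}}))$ to $H^0(\Omega_{\overline{V}}(\log(\Delta+\overline{E}_1+\overline{E}_2)))$; this last group vanishes because $H^0(\Omega_{\overline{V}})=0$ and the connecting homomorphism in the residue sequence is the first Chern class map, injective since the intersection matrix of the curves $\overline{D}_i,\overline{S}_j,\overline{F}_1,\overline{E}_k$ is invertible. The vanishing on $V$ then follows from the inclusion $\Omega_V(\log D)\hookrightarrow\psi_*\Omega_{\overline{V}}(\log\Delta)$ and the projection formula. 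Your sketch contains no substitute for this double-cover step.
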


The following proposition tells us a sufficient condition for the existence of a global $\mathbb{Q}$-Gorenstein smoothing of $X$.

\begin{proposition}[Y. Lee and J. Park~\cite{Lee-Park-K^2=2}]\label{proposition:sufficient_H^2=0}
Let $X$ be a normal projective surface with singularities of class $T$. Let $\pi : V \to X$ be the minimal resolution and let $A$ be the reduced exceptional divisor. Suppose that $H^2(T_V(-\log{A}))=0$. Then there is a global $\mathbb{Q}$-Gorenstein smoothing of $X$.
\end{proposition}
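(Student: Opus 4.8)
The plan is to prove the implication through the deformation theory of the $\mathbb{Q}$-Gorenstein deformation functor $\mathrm{Def}^{\mathrm{qG}}_X$, whose tangent space $T^1$ and obstruction space $T^2$ are assembled from local and global data by a local-to-global spectral sequence. The goal is to show that the smoothing directions prescribed locally at each singular point can be globalized to an actual one-parameter $\mathbb{Q}$-Gorenstein deformation whose general fiber is smooth; this follows once (i) the global obstruction space $T^2$ vanishes, and (ii) the comparison map from global first-order deformations to the product of the local ones is surjective. The hypothesis $H^2(T_V(-\log A))=0$ is exactly what is needed to force both (i) and (ii).

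First I would recall the local structure. Since $X$ has only singularities of class $T$, these are isolated cyclic quotient singularities that are $\mathbb{Q}$-Gorenstein smoothable; consequently the local $\mathbb{Q}$-Gorenstein obstruction sheaf $\mathcal{T}^2_{\mathrm{qG},X}$ vanishes, while the local $\mathbb{Q}$-Gorenstein deformation sheaf $\mathcal{T}^1_{\mathrm{qG},X}$ is a nonzero skyscraper sheaf supported on the finitely many singular points and carries a distinguished smoothing direction at each of them. Because $\mathcal{T}^1_{\mathrm{qG},X}$ is a skyscraper, $H^p(X,\mathcal{T}^1_{\mathrm{qG},X})=0$ for all $p\ge 1$, and likewise $H^0(X,\mathcal{T}^2_{\mathrm{qG},X})=0$.

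Next I would run the five-term exact sequence of the local-to-global spectral sequence $E_2^{p,q}=H^p(X,\mathcal{T}^q_{\mathrm{qG},X})\Rightarrow T^{p+q}$, namely
$$0 \to H^1(\mathcal{T}^0_{\mathrm{qG},X}) \to T^1 \to H^0(\mathcal{T}^1_{\mathrm{qG},X}) \xrightarrow{\,d_2\,} H^2(\mathcal{T}^0_{\mathrm{qG},X}) \to T^2.$$
Combined with $H^1(\mathcal{T}^1_{\mathrm{qG}})=0=H^0(\mathcal{T}^2_{\mathrm{qG}})$ from the previous step, this identifies the global obstruction as $T^2\cong H^2(\mathcal{T}^0_{\mathrm{qG},X})$. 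The crucial comparison is then to pass to the minimal resolution $\pi:V\to X$: its exceptional divisor $A$ is a simple normal crossing configuration of smooth rational curves resolving the cyclic quotient singularities, so $T_V(-\log A)$ is the locally free sheaf of vector fields tangent to $A$, and one identifies $\pi_*T_V(-\log A)$ with the $\mathbb{Q}$-Gorenstein tangent sheaf $\mathcal{T}^0_{\mathrm{qG},X}$. Since the fibers of $\pi$ are at most one-dimensional and $R^1\pi_*T_V(-\log A)$ is supported on the singular points, the Leray spectral sequence degenerates in the relevant range and yields $H^2(X,\mathcal{T}^0_{\mathrm{qG},X})\cong H^2(V,T_V(-\log A))$. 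By hypothesis the right-hand side is zero, so $T^2=0$; the functor $\mathrm{Def}^{\mathrm{qG}}_X$ is therefore unobstructed, and the vanishing of $H^2(\mathcal{T}^0_{\mathrm{qG}})$ forces $d_2=0$ above, so that $T^1\to H^0(\mathcal{T}^1_{\mathrm{qG},X})$ is surjective. Choosing the local smoothing direction at each singular point, surjectivity lifts it to a global first-order $\mathbb{Q}$-Gorenstein deformation, smoothness extends it to a deformation over a smooth one-parameter base, and the general fiber is smooth (smooth near each singular point because the chosen direction is a smoothing, and smooth elsewhere because $X$ already is). This produces the desired global $\mathbb{Q}$-Gorenstein smoothing.

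The main obstacle I anticipate is not the spectral-sequence bookkeeping but the foundational identifications underlying it: setting up the $\mathbb{Q}$-Gorenstein deformation functor correctly via index-one covers of the class $T$ singularities, verifying that class $T$ singularities are locally unobstructed with $\mathcal{T}^1_{\mathrm{qG}}$ a skyscraper carrying a smoothing direction, and --- most delicately --- proving the sheaf identification $\pi_*T_V(-\log A)\cong\mathcal{T}^0_{\mathrm{qG},X}$ together with the control of $R^1\pi_*$ needed for the Leray comparison. Once these local and comparison statements are in place, the global conclusion is formal.
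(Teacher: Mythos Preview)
The paper does not prove this proposition; it is quoted verbatim as a result of Y.~Lee and J.~Park~\cite{Lee-Park-K^2=2} and used as a black box, so there is no ``paper's own proof'' to compare against. Your outline is the standard argument behind that result: run the local-to-global spectral sequence for $\mathrm{Def}^{\mathrm{qG}}_X$, use that class~$T$ singularities have $\mathcal{T}^2_{\mathrm{qG}}=0$ and skyscraper $\mathcal{T}^1_{\mathrm{qG}}$, and reduce the remaining obstruction $H^2(\mathcal{T}^0_{\mathrm{qG},X})$ to $H^2(T_V(-\log A))$ via the resolution.

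One caution on a detail you flagged yourself: the Leray comparison does not by itself give an \emph{isomorphism} $H^2(X,\mathcal{T}^0_{\mathrm{qG},X})\cong H^2(V,T_V(-\log A))$. The five-term Leray sequence
\[
\cdots \to H^0(R^1\pi_*T_V(-\log A)) \to H^2(\pi_*T_V(-\log A)) \to H^2(T_V(-\log A))
\]
only shows the latter surjects onto a quotient of the former; to kill $H^2(\pi_*T_V(-\log A))$ from $H^2(T_V(-\log A))=0$ you also need the edge map from $H^0(R^1\pi_*)$ to be surjective, or else an independent identification of $\mathcal{T}^0_{\mathrm{qG},X}$ with $\pi_*T_V(-\log A)$ together with a separate vanishing. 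In the Lee--Park argument this is handled, but it is exactly the ``delicate'' point you anticipated, and your write-up slightly overstates what Leray alone delivers.
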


Since the contraction map $Z \to X$ is the minimal resolution of the singular surface $X$, the existence of a global $\mathbb{Q}$-Gorenstein smoothing of $X$ follows from the vanishing of the cohomology $H^2(T_Z(-\log{A}))$, where $A$ is the divisor on $Z$ consisting of the two linear chains of $\mathbb{CP}^1$'s contracted to the two singular points of $X$. On the one hand we have the following well-known result.

\begin{proposition} [{Flenner and Zaidenberg~\cite[\S1]{Flenner-Zaidenberg}}]\label{proposition:Flenner-Zaidenberg}
Let $V$ be a nonsingular surface and let $A$ be a simple normal crossing divisor in $V$. Let $f : V' \to V$ be a blowing up of $V$ at a point p of $A$. Set $A'=f^{-1}(A)_{red}$. Then $h^2(T_{V'}(-\log{A'}))=h^2(T_V(-\log{A}))$.
\end{proposition}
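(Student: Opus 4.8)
The plan is to read off $h^2$ through the Leray spectral sequence for $f$, after computing $f_*$ and $R^{\bullet}f_*$ of $T_{V'}(-\log A')$. Since $f$ is an isomorphism away from the exceptional curve $E=f^{-1}(p)\cong\mathbb{P}^1$ and every fibre of $f$ has dimension at most one, $R^qf_*\mathcal{F}=0$ for all $q\ge 2$ and all coherent $\mathcal{F}$, so everything is concentrated along $E$. Because $A$ is simple normal crossing, $p$ is either a smooth point of $A$ (on one component) or an ordinary node of $A$ (on two components), and I would handle the two cases by an explicit computation in the two standard charts of the blow-up.

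If $p$ is a node, write $A=\{xy=0\}$ locally. In the chart $x=u,\ y=uv$ one has $A'=\{uv=0\}$, and the pullbacks $du/u$ and $du/u+dv/v$ of $dx/x$ and $dy/y$ already generate $\Omega^1_{V'}(\log A')$; together with the symmetric chart this shows the natural map $f^*\Omega^1_V(\log A)\to\Omega^1_{V'}(\log A')$ is an isomorphism, hence dually $T_{V'}(-\log A')=f^*T_V(-\log A)$. By the projection formula and $R^qf_*\mathcal{O}_{V'}=0$ for $q>0$ we get $f_*T_{V'}(-\log A')=T_V(-\log A)$ and $R^qf_*T_{V'}(-\log A')=0$ for $q>0$; Leray then gives $h^q(T_{V'}(-\log A'))=h^q(T_V(-\log A))$ for every $q$.

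If $p$ is a smooth point of $A$, write $A=\{x=0\}$ locally. The same chart computation shows $f^*\Omega^1_V(\log A)\to\Omega^1_{V'}(\log A')$ is injective with cokernel a line bundle supported on $E$; following the transition between the two charts identifies it as $\mathcal{O}_E(-1)$, giving
\[
0\longrightarrow f^*\Omega^1_V(\log A)\longrightarrow \Omega^1_{V'}(\log A')\longrightarrow \mathcal{O}_E(-1)\longrightarrow 0 .
\]
Dualizing and using $\mathcal{E}xt^1_{V'}(\mathcal{O}_E(-1),\mathcal{O}_{V'})\cong\mathcal{O}_E(1)\otimes N_{E/V'}=\mathcal{O}_E$ yields
\[
0\longrightarrow T_{V'}(-\log A')\longrightarrow f^*T_V(-\log A)\longrightarrow \mathcal{O}_E\longrightarrow 0 .
\]
Applying $f_*$, and using $R^1f_*\mathcal{O}_E=H^1(\mathbb{P}^1,\mathcal{O})=0$, $R^1f_*f^*T_V(-\log A)=T_V(-\log A)\otimes R^1f_*\mathcal{O}_{V'}=0$, and $f_*\mathcal{O}_E=\mathcal{O}_p$ (the skyscraper at $p$), the direct-image sequence becomes
\[
0\to f_*T_{V'}(-\log A')\to T_V(-\log A)\to \mathcal{O}_p\to R^1f_*T_{V'}(-\log A')\to 0,
\]
with $R^{q}f_*T_{V'}(-\log A')=0$ for $q\ge 2$. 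The middle arrow is surjective — equivalently $R^1f_*T_{V'}(-\log A')=0$ — because some germ of a log-tangent field on $V$ fails to lift to $V'$; concretely $\partial/\partial y$, which is tangent to $\{x=0\}$, equals $u^{-1}\,\partial/\partial v$ in the chart $x=u,\ y=uv$ and is singular along $E$. Hence $0\to f_*T_{V'}(-\log A')\to T_V(-\log A)\to \mathcal{O}_p\to 0$ is exact and all higher direct images vanish, so Leray gives $H^2(V',T_{V'}(-\log A'))\cong H^2(V,f_*T_{V'}(-\log A'))$, while the long exact sequence of the last short exact sequence together with $H^1(V,\mathcal{O}_p)=H^2(V,\mathcal{O}_p)=0$ gives $H^2(V,f_*T_{V'}(-\log A'))\cong H^2(V,T_V(-\log A))$. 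Chaining these isomorphisms proves $h^2(T_{V'}(-\log A'))=h^2(T_V(-\log A))$.

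The step I expect to be the main obstacle is the bookkeeping in the smooth-point case: getting the twist in the cokernel of $f^*\Omega^1_V(\log A)\to\Omega^1_{V'}(\log A')$ exactly right — this requires tracking the two affine charts of the blow-up and the point where the strict transform of $A$ meets $E$ — and then carrying that twist correctly through the dualization, so that the direct image of the skyscraper term is precisely $\mathcal{O}_p$; it is this ``$\mathcal{O}_p$ in, $\mathcal{O}_p$ out'' cancellation that forces the two $h^2$'s to coincide. A more streamlined presentation would instead invoke Serre duality, $h^2(T_V(-\log A))=h^0(\Omega^1_V(\log A)\otimes\omega_V)$ and likewise on $V'$, and use $\omega_{V'}=f^*\omega_V(E)$ together with the same local description of $\Omega^1_{V'}(\log A')$ to show the two spaces of global sections agree; this rests on the same local input but organizes it more transparently.
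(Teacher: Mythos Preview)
The paper does not prove this proposition; it is quoted from Flenner--Zaidenberg and used as a black box. Your argument is a correct, self-contained proof. The case split (node versus smooth point of $A$) is the right one, and the local chart computations are accurate: in the node case $f^*\Omega^1_V(\log A)\to\Omega^1_{V'}(\log A')$ is indeed an isomorphism, and in the smooth-point case the cokernel is $\mathcal{O}_E(-1)$, so the dual sequence has cokernel $\mathcal{O}_E$; your check that $\partial/\partial y$ fails to lift is exactly what forces $T_V(-\log A)\to\mathcal{O}_p$ to be surjective and hence $R^1f_*T_{V'}(-\log A')=0$. The Leray/long-exact-sequence conclusion then follows.

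Your closing remark about the Serre-dual formulation is worth highlighting: the paper itself works on the dual side, rewriting $h^2(T_V(-\log A))$ as $h^0(\Omega^1_V(\log A)\otimes\omega_V)$ in Proposition~\ref{proposition:H^2=0}, so the version of the blow-up invariance statement that is actually consumed downstream is the $h^0$ one. Proving the result directly in that form (using $\omega_{V'}=f^*\omega_V(E)$ together with your description of $\Omega^1_{V'}(\log A')$) would match the paper's usage more closely and avoids the $\mathcal{E}xt$ and $R^1f_*$ bookkeeping, at the cost of essentially the same local computation.
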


Let $\tau : V \to Y$ be the blowing-up at the node of the nodal singular fiber $F$ and let $E$ be the exceptional divisor of $\tau$. We denote again by $F$ the proper transforms of the nodal singular fibers $F$ on $V$. Let $D_1, \dotsc, D_7$ be a part of the $I_9$-singular fiber; Figure~\ref{figure:V}. Let
\begin{equation*}
D = D_1 + \dotsb + D_7 + S_1 + S_2 + F \in \divisor(V).
\end{equation*}
By Proposition~\ref{proposition:Flenner-Zaidenberg}, we have
\begin{equation}\label{equation:H^2(Z)=H^2(V)}
h^2(T_Z(-\log{A}))=h^2(T_V(-\log{D})).
\end{equation}

\begin{figure}[hbtb]
\centering
\includegraphics{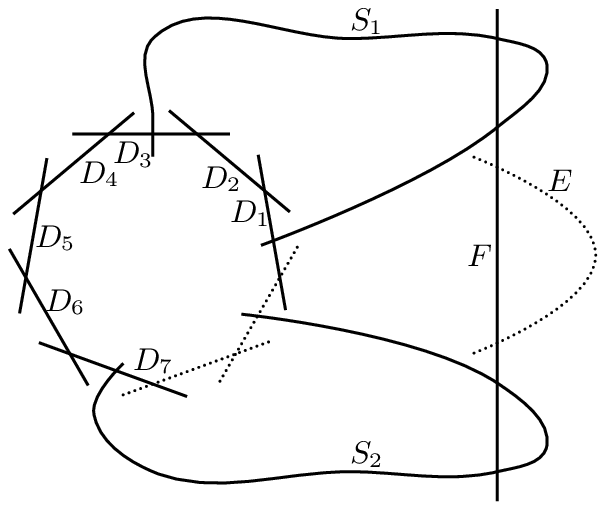}
\caption{A surface $V = \blup{Y}{}$}
\label{figure:V}
\end{figure}

Theorem~\ref{theorem:obstruction=0} follows from \eqref{equation:H^2(Z)=H^2(V)} and the following proposition:

\begin{proposition}\label{proposition:H^2=0}
$H^2(T_V(-\log{D})) = H^0(\Omega_V(\log{D})(K_V)) = 0$.
\end{proposition}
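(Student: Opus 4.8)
The plan is to prove the vanishing in two stages, corresponding to the two equalities in the statement. First I would establish the identification $H^2(T_V(-\log D)) \cong H^0(\Omega^1_V(\log D)(K_V))$ by Serre duality: since $V$ is a smooth projective surface, $H^2(T_V(-\log D))$ is Serre-dual to $H^0\!\left((T_V(-\log D))^\vee \otimes K_V\right)$, and the dual of the logarithmic tangent sheaf $T_V(-\log D)$ is the sheaf of logarithmic $1$-forms $\Omega^1_V(\log D)$. So the first equality is formal once one recalls that $D$ is a simple normal crossing divisor on $V$ (which it is by construction — it is built from fibers, exceptional curves, and bisections of the Enriques fibration arranged to cross normally). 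The substance of the proposition is therefore the vanishing $H^0(\Omega^1_V(\log D)(K_V)) = 0$.

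For the second stage I would work on the Enriques surface $Y$ itself and pull back, or equivalently work directly on $V = \widetilde{Y}$ (one blow-up at the node of $F$). A global section of $\Omega^1_V(\log D)(K_V)$ is a $K_V$-twisted logarithmic $1$-form with at worst simple poles along the components of $D$. The strategy, following Keum--Lee, is to analyze such a section via the residue exact sequences
\begin{equation*}
0 \to \Omega^1_V(\log D')(K_V) \to \Omega^1_V(\log D)(K_V) \xrightarrow{\ \mathrm{res}\ } \mathcal{O}_{D_i}(K_V) \to 0
\end{equation*}
peeling off one component $D_i$ of $D$ at a time (here $D' = D - D_i$). Each residue lands in $\mathcal{O}_{D_i}(K_V|_{D_i})$, and since every component of $D$ is a smooth rational curve $\cong \mathbb{P}^1$ while $K_V|_{D_i}$ has degree $-D_i^2 - 2$ (by adjunction, using $K_V \cdot D_i + D_i^2 = -2$), I would compute these degrees from the self-intersection data visible in Figure~\ref{figure:V}: for the $(-2)$-curves $D_1,\dots,D_7$ coming from the $I_9$-fiber the restriction $K_V|_{D_i}$ has degree $0$, so $H^0(\mathcal{O}_{\mathbb{P}^1}) = \mathbb{C}$ and the residue is at most a constant, constrained by the residue theorem $\sum \mathrm{res} = 0$ around each node of the chain; for $F$ (now a $(-4)$-curve after blowing up its node) and for the bisections $S_1, S_2$ one gets negative-degree line bundles on $\mathbb{P}^1$, forcing the residue along those components to vanish identically. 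Combining these constraints should force every residue to be zero, so any section actually lies in $H^0(\Omega^1_V(K_V)) = H^0(\Omega^1_V \otimes K_V)$, which is Serre-dual to $H^1(T_V) = H^1(\mathcal{O}_V)^{\oplus ?}$ — more directly, $H^0(\Omega^1_V \otimes K_V)$ vanishes because $V$ is rational-like in the relevant sense: $K_V$ restricted to a general fiber of the elliptic fibration is trivial, while $\Omega^1_V$ has no sections twisting positively enough, and one reduces to $H^0(\Omega^1_Y(K_Y)) = 0$ on the Enriques surface $Y$, where $q(Y) = 0$ and $2K_Y = 0$ make this immediate (a nonzero section would give a nonzero map $\mathcal{O}_Y(-K_Y) \to \Omega^1_Y$, impossible since $\Omega^1_Y$ is stable of slope $0$ with respect to any polarization while $-K_Y$ is torsion but such a subsheaf would have to be saturated with the wrong numerics, or simply $h^0(\Omega^1_Y \otimes L) = 0$ for $L$ a nontrivial $2$-torsion line bundle is classical).

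The main obstacle I expect is bookkeeping the residue constraints along the two long chains $C_{73,50}$ and $C_{19,13}$ correctly: one must track, node by node along each chain of $\mathbb{P}^1$'s, which restricted twists $K_V|_{D_i}$ are of degree $0$ (allowing a one-dimensional space of residues) versus negative degree (forcing zero), and then use the compatibility of residues at each intersection point to propagate vanishing from the "ends" of the chains inward. Because the chains meet the remaining part of $D$ (the bisections $S_1, S_2$, the curve $F$, and the portions $D_1,\dots,D_7$ of the $I_9$-fiber) in a specific combinatorial pattern, the vanishing has to cascade: once the residue is zero along one component, adjacency at a node kills the next, and so on. Verifying that this cascade reaches every component — i.e.\ that the configuration has no "residue cycle" that could support a nonzero logarithmic form — is the crux, and it is precisely here that the particular numerology of the blow-up (four points $\bullet$, then $3$ and $8$ infinitely-near points at the two $\bigodot$'s) was engineered. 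Once the cascade is complete, reduction to the Enriques surface and the classical vanishing $H^0(\Omega^1_Y(K_Y)) = 0$ finish the proof.
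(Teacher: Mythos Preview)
Your approach has a concrete error that breaks the argument. You correctly write that $\deg K_V|_{D_i} = -D_i^2 - 2$, but then you assert that $F$ and the bisections $S_1,S_2$ give \emph{negative}-degree line bundles on $\mathbb{P}^1$. They do not: $F$ is a $(-4)$-curve on $V$, so $\deg K_V|_F = 2$ and $h^0(\mathcal{O}_F(K_V)) = 3$; the bisections $S_i$ are $(-2)$-curves on $Y$ (hence on $V$), so $\deg K_V|_{S_i} = 0$. None of the ten components of $D$ gives a negative restriction, so there is no place for the residue ``cascade'' to start, and your reduction to $H^0(\Omega^1_V(K_V))$ never gets off the ground. (You also slip into discussing the chains $C_{73,50}$ and $C_{19,13}$, which live on $Z$, not $V$; on $V$ the divisor $D$ is just the ten curves $D_1,\dots,D_7,S_1,S_2,F$.)

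The deeper obstruction is the torsion in $K_V$. On $V$ one has $K_V = \tau^*K_Y + E$ with $K_Y$ a nontrivial $2$-torsion class, so $K_V$ is not the class of an effective divisor and cannot be absorbed into the log pole by the Esnault--Viehweg sequence. This is exactly why the paper passes to the unramified K3 double cover $\psi:\overline{V}\to V$: there $K_{\overline{V}} = \overline{E}_1 + \overline{E}_2$ is honestly effective, so two applications of the Esnault--Viehweg sequence trade the twist $K_{\overline{V}}$ for the log poles $\overline{E}_1,\overline{E}_2$, reducing to $H^0(\Omega_{\overline{V}}(\log(\Delta+\overline{E}_1+\overline{E}_2)))$; that vanishes because $H^0(\Omega_{\overline{V}})=0$ and the Chern-class connecting map is injective (the intersection matrix of the $14$ components is nondegenerate). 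Finally an injection $\Omega_V(\log D)\hookrightarrow \psi_*\Omega_{\overline{V}}(\log\Delta)$ plus the projection formula pull the vanishing back down to $V$. Your residue idea is in the right spirit, but without passing to the cover (or otherwise killing the $2$-torsion in $K_V$) it does not close.
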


In order to prove Proposition~\ref{proposition:H^2=0}, we follow the same strategy as in J. Keum and Y. Lee~\cite{Keum-Lee}. That is, we consider a K3 surface $\overline{Y}$ blown-up two times which is a double covering of the surface $V=\blup{Y}{}$ and then we use the push-forward map of the double covering for proving that $H^0(\Omega_V(\log{D})(K_V)) = 0$.

We first construct a double covering of the surface $V=\blup{Y}{}$. According to Kondo~\cite{Kondo}, there is an unramified double covering $\phi : \overline{Y} \to Y$ from a K3 surface $\overline{Y}$ to the Enriques surface $Y$. The K3 surface $\overline{Y}$ has two $I_9$-singular fiber, two nodal singular fiber $\overline{F}_1$ and $\overline{F}_2$, and four sections $\overline{S}_1, \dotsc, \overline{S}_4$ such that $\phi(\overline{F}_1) = \phi(\overline{F}_2) = F$, $\phi(\overline{S}_1) = \phi(\overline{S}_3) = S_1$, and $\phi(\overline{S}_2) = \phi(\overline{S}_4) = S_2$; Figure~\ref{figure:Ybar}. Let $\overline{\tau} : \overline{V} \to \overline{Y}$ be the blowing-up at the two nodes of the two nodal singular fibers $\overline{F}_1$, $\overline{F}_2$ and let $\overline{E}_1$, $\overline{E}_2$ be the exceptional divisors of $\overline{\tau}$. We denote again by $\overline{F}_1$, $\overline{F}_2$ the proper transforms of the nodal singular fibers $\overline{F}_1$, $\overline{F}_2$ on $\overline{V}$; Figure~\ref{figure:Vbar}. It is clear that there is an induced unramified double covering $\psi: \overline{V} \to V$. We denote
\begin{equation*}
\Delta = \overline{D}_1 + \dotsb + \overline{D}_7 + \overline{S}_1 + \dotsb + \overline{S}_4 + \overline{F}_1 \in \divisor(\overline{V}).
\end{equation*}
Note that $\Delta \le \psi^{\ast}{D}$ and $\psi_{\ast}{\Delta} = D$.

\begin{figure}[hbtb]
\centering
\includegraphics{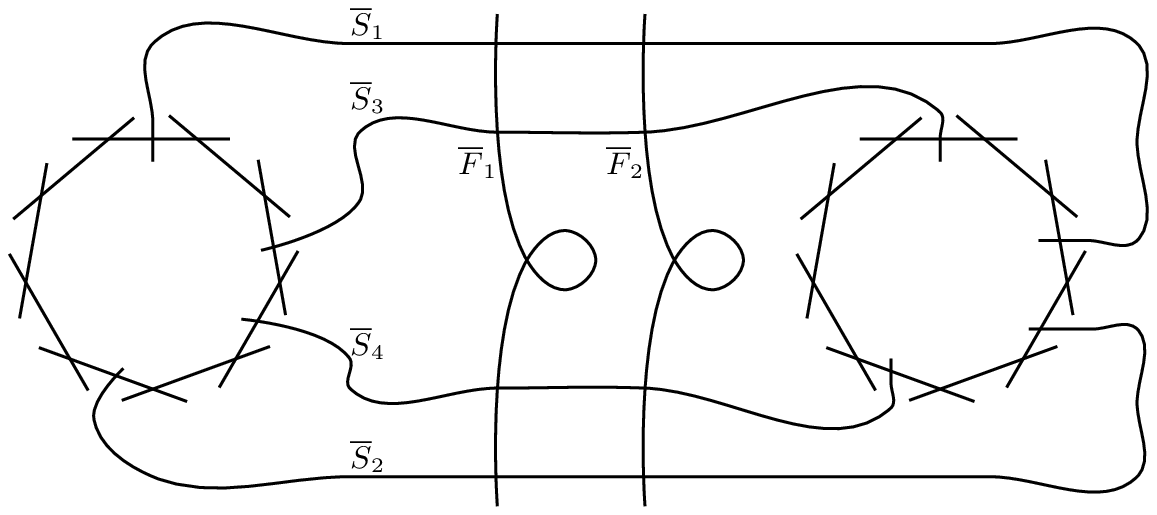}
\caption{A K3 surface $\overline{Y}$}
\label{figure:Ybar}
\end{figure}

\begin{figure}[hbtb]
\centering
\includegraphics{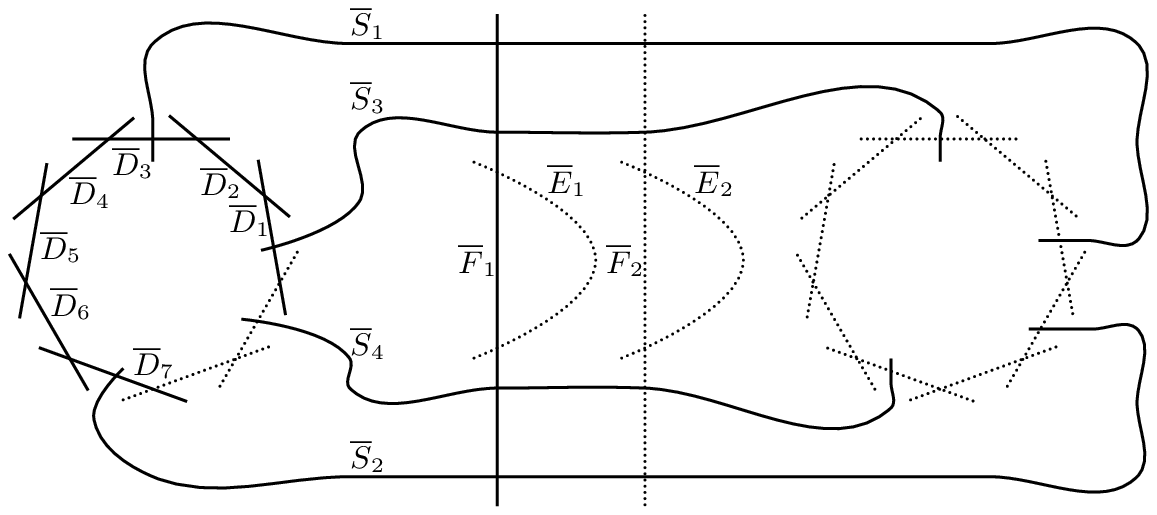}
\caption{A surface $\overline{V} = \blup{\overline{Y}}{2}$}
\label{figure:Vbar}
\end{figure}

The proof of Proposition~\ref{proposition:H^2=0} begins with the following two results.

\begin{proposition}[{Esnault and Viehweg~\cite[\S2]{Esnault-Viehweg}}]\label{proposition:Esnault-Viehweg}
Let $A = \sum_{i=1}^{r} {A_i}$ be a reduced normal crossing divisor on an algebraic manifold $W$. One has the exact sequence
\begin{equation*}
0 \to \Omega_W(\log{A}) \to \Omega_W(\log(A-A_1))(A_1) \to \Omega_{A_1}(\log(A-A_1)|_{A_1})(A_1) \to 0.
\end{equation*}
\end{proposition}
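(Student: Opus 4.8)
The plan is to exhibit the two arrows explicitly and to verify exactness by a local computation in coordinates adapted to the normal crossing divisor $A$. Both $\Omega_W(\log A)$ and $\Omega_W(\log(A-A_1))(A_1)$ are canonically subsheaves of the sheaf $j_*\Omega_U$ of meromorphic $1$-forms regular on $U = W \setminus A$, where $j : U \hookrightarrow W$; so the left-hand map is realized as an honest inclusion of subsheaves, and the right-hand map as a residue-restriction map onto the smooth divisor $A_1$. Since all three sheaves and both maps are defined canonically, it suffices to check the inclusion, the surjectivity, and the identification of the kernel with the image in local (\'etale or analytic) coordinates in which each component $A_i$ is a coordinate hyperplane; the local descriptions then glue to the asserted global sequence.

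First I would fix a point of $W$ and choose coordinates $z_1,\dots,z_n$ with $A_1 = \{z_1 = 0\}$ and with the local components of $A - A_1$ given by $\{z_2 = 0\},\dots,\{z_k = 0\}$. As a free $\mathcal{O}_W$-module, $\Omega_W(\log A)$ is generated by $\frac{dz_1}{z_1},\frac{dz_2}{z_2},\dots,\frac{dz_k}{z_k},dz_{k+1},\dots,dz_n$, while $\Omega_W(\log(A-A_1))(A_1)$ is generated by $\frac{1}{z_1}$ times $dz_1,\frac{dz_2}{z_2},\dots,\frac{dz_k}{z_k},dz_{k+1},\dots,dz_n$. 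Comparing these generators shows immediately that $\Omega_W(\log A)\subseteq\Omega_W(\log(A-A_1))(A_1)$, which defines the injection $\iota$, and that the quotient is annihilated by $z_1$, hence supported on $A_1$.

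Next I would define the right-hand arrow $\rho$ as the composite of the restriction to $A_1$ with the map coming from the logarithmic conormal sequence of the smooth divisor $A_1 \subset W$,
\[
0 \to N^\vee_{A_1/W} \to \Omega_W(\log(A-A_1))\big|_{A_1} \to \Omega_{A_1}(\log(A-A_1)|_{A_1}) \to 0,
\]
after twisting by $\mathcal{O}_W(A_1)|_{A_1} = N_{A_1/W}$. Concretely, writing a local section of $\Omega_W(\log(A-A_1))(A_1)$ as $\frac{1}{z_1}\eta$ with $\eta\in\Omega_W(\log(A-A_1))$, the map $\rho$ restricts $\eta$ to $A_1$, discards its component along $dz_1$ (which maps to zero under $\Omega_W|_{A_1}\to\Omega_{A_1}$), and tensors the surviving logarithmic form with the normal generator $\frac{1}{z_1}\big|_{A_1}$. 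The local computation then shows that $\rho$ is surjective and that $\rho(\frac{1}{z_1}\eta)=0$ exactly when every coefficient of $\eta$ except that of $dz_1$ is divisible by $z_1$; but this is precisely the condition that $\frac{1}{z_1}\eta$ lie in $\Omega_W(\log A)$. Hence $\ker\rho = \image\,\iota$, and the sequence is exact.

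The main point demanding care is the canonicity of $\rho$: one must verify that the residue-restriction map is independent of the chosen adapted coordinates, so that the local maps patch to a single morphism of sheaves, and that the twist by $\mathcal{O}_W(A_1)$ appearing on the quotient is correctly identified with the normal bundle $N_{A_1/W}$ that enters through the factor $\frac{1}{z_1}$. Equivalently, one checks that the residue is well defined modulo the ambiguity in the local equation $z_1$ of $A_1$, i.e.\ modulo $\mathcal{O}(-A_1)$. Once this coordinate-independence is settled, exactness is immediate from the generator computation above, which completes the proof.
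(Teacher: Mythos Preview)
Your argument is correct. Note, however, that the paper does not supply its own proof of this proposition: it is quoted as a black box from Esnault--Viehweg's \emph{Lectures on vanishing theorems} and then applied in the proof of Lemma~3.4. Your approach---realizing the left arrow as the honest inclusion of subsheaves of $j_*\Omega_U$, and the right arrow as restriction to $A_1$ followed by the projection coming from the logarithmic conormal sequence of $A_1\subset W$ twisted by $N_{A_1/W}\cong\mathcal{O}(A_1)|_{A_1}$---is the standard one, and your local verification in adapted coordinates (where the inclusion becomes the diagonal matrix $\mathrm{diag}(1,z_1,\dots,z_1)$ in the obvious bases) correctly identifies the cokernel with $\Omega_{A_1}(\log(A-A_1)|_{A_1})\otimes N_{A_1/W}$.
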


\begin{lemma}\label{lemma:H^0(Delta)=0}
$H^0(\Omega_{\overline{V}}(\log{\Delta})(K_{\overline{V}}))=0$.
\end{lemma}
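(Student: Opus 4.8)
The plan is to compute $H^0(\Omega_{\overline{V}}(\log \Delta)(K_{\overline{V}}))$ directly on the K3 blow-up $\overline{V}$, where things are more tractable because $K_{\overline{V}}$ is effective and supported on the two exceptional curves $\overline{E}_1, \overline{E}_2$. First I would peel off the components of $\Delta$ one at a time using the residue sequence of Proposition~\ref{proposition:Esnault-Viehweg}: writing $\Delta = \sum \Delta_i$ (the seven chain curves $\overline{D}_j$, the four sections $\overline{S}_k$, and the nodal fibre $\overline{F}_1$), each step replaces $\Omega_{\overline{V}}(\log \Delta')(K_{\overline{V}})$ by $\Omega_{\overline{V}}(\log(\Delta'-\Delta_i))(\Delta_i + K_{\overline{V}})$ up to a boundary term $\Omega_{\Delta_i}(\log(\cdots))(\Delta_i + K_{\overline{V}})$ supported on a rational curve. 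So the strategy is: (i) show each boundary term has no $H^0$, and (ii) run the induction down to $H^0(\Omega_{\overline{V}}(K_{\overline{V}} + \Delta))$, then handle that base case.

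For step (i), each $\Delta_i$ is a smooth rational curve, so $\Omega_{\Delta_i} \cong \mathcal{O}_{\mathbb{P}^1}(-2)$, and the boundary sheaf is $\mathcal{O}_{\mathbb{P}^1}(-2 + (\text{number of other components of }\Delta\text{ meeting }\Delta_i) + \Delta_i\cdot\Delta_i + K_{\overline{V}}\cdot\Delta_i)$. Since $K_{\overline{V}} = \overline{E}_1 + \overline{E}_2$ (the blow-up of a K3 at two points), $K_{\overline{V}}\cdot\Delta_i$ is $0$ or $1$ depending on whether $\Delta_i$ is one of the curves meeting an exceptional divisor, and the self-intersections are the ones read off from the configuration ($-2$ for the $I_9$-chain curves and the strict transform of $\overline{F}_1$, and $-1$ or whatever the sections are). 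I would make a small table of these numbers from Figure~\ref{figure:Vbar} and check that in every case the resulting degree on $\mathbb{P}^1$ is negative, so $H^0 = 0$ of the boundary term; this reduces everything to the base case by the long exact sequences in cohomology.

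For the base case I need $H^0(\Omega_{\overline{V}}(K_{\overline{V}} + \Delta)) = 0$. Here $\Omega_{\overline{V}}$ pulls back from the K3 surface $\overline{Y}$ (blowing up only introduces a twist by exceptional divisors), and $H^0(\Omega_{\overline{Y}}) = 0$ since a K3 surface has no global $1$-forms ($h^{1,0}=0$). The twisting divisor $K_{\overline{V}} + \Delta = \overline{E}_1 + \overline{E}_2 + \Delta$ is a union of rational curves of small degree, so a section of $\Omega_{\overline{V}}(K_{\overline{V}}+\Delta)$ would have to have poles along these curves; I would argue, again via the residue/restriction sequences or by pushing down to $\overline{Y}$ and comparing with $H^0(\Omega_{\overline{Y}}(\text{image divisor}))$, that the relevant linear systems are too small to support such a form. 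Concretely, pushing forward along $\overline{\tau}$ and then analyzing $H^0(\Omega_{\overline{Y}}(\tau_*\Delta))$ using that $\Omega_{\overline{Y}}$ is $\mu$-stable of slope $0$ (Bogomolov) while the twist has small degree should force vanishing, after checking the finitely many components.

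The main obstacle I anticipate is bookkeeping rather than conceptual: correctly extracting all the intersection numbers and the precise shape of $\Delta$ from the figures (especially which sections meet which exceptional curves after blowing up the two nodes), and making sure the residue sequence is applied to a genuinely simple-normal-crossings divisor at each stage so that Proposition~\ref{proposition:Esnault-Viehweg} applies. The potentially delicate point is the base case $H^0(\Omega_{\overline{V}}(K_{\overline{V}}+\Delta))$: if a naive degree count on $\mathbb{P}^1$'s is not decisive for one or two components, I would instead combine the restriction sequences for several adjacent components of $\Delta$ simultaneously, or invoke stability of $\Omega$ on the K3 to kill the remaining contribution. Once Lemma~\ref{lemma:H^0(Delta)=0} is in hand, Proposition~\ref{proposition:H^2=0} follows from it by pushing forward along the unramified double cover $\psi$: a nonzero section of $\Omega_V(\log D)(K_V)$ pulls back to a nonzero section of $\psi^*(\Omega_V(\log D)(K_V)) \supseteq \Omega_{\overline{V}}(\log \Delta)(K_{\overline{V}})$ (using $\Delta \le \psi^* D$, $K_{\overline{V}} = \psi^* K_V$, and $\psi_* \Delta = D$), contradicting the Lemma.
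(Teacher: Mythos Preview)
Your direction of running the Esnault--Viehweg sequence is the opposite of the paper's, and this is where the argument loses traction. You peel components of $\Delta$ out of the log and into the twist, aiming at the base case $H^0(\Omega_{\overline{V}}(K_{\overline{V}}+\Delta))$. The paper instead absorbs the twist $K_{\overline{V}}=\overline{E}_1+\overline{E}_2$ \emph{into} the log (two steps, with boundary terms on the $(-1)$-curves $\overline{E}_i$ that visibly have no $H^0$), reducing to $H^0(\Omega_{\overline{V}}(\log(\Delta+\overline{E}_1+\overline{E}_2)))$ with no twist at all. That last group is then killed by the standard residue sequence
\[
0\to\Omega_{\overline{V}}\to\Omega_{\overline{V}}(\log(\Delta+\overline{E}_1+\overline{E}_2))\to\bigoplus \mathcal{O}_{\text{component}}\to 0,
\]
using $H^0(\Omega_{\overline{V}})=0$ and injectivity of the connecting (first Chern class) map, which follows because the intersection matrix of the fourteen components is nondegenerate. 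This is the clean mechanism your proposal is missing.

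Your route has two concrete problems. First, step~(i) does not go through for every component: for $\overline{F}_1$ one has $\overline{F}_1^2=-4$, $K_{\overline{V}}\cdot\overline{F}_1=2$, and $\overline{F}_1$ meets the four sections once each, so the boundary degree is $-2+(K_{\overline{V}}+\Delta)\cdot\overline{F}_1=-2+2=0$ and the boundary $H^0$ is nonzero. (This is not fatal, since you still have an injection into the base case, but it shows the ``small table'' will not close.) Second, and more seriously, the stability argument for the base case does not work: $\mu$-stability of $\Omega_{\overline{Y}}$ only says that a line subbundle has negative slope, i.e.\ if $\mathcal{O}(-D)\hookrightarrow\Omega_{\overline{Y}}$ then $D\cdot H>0$, which is automatic for effective $D$ and therefore rules out nothing. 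So stability gives no obstruction to $H^0(\Omega_{\overline{V}}(K_{\overline{V}}+\Delta))\neq 0$, and your base case is at least as hard as the original statement. The fallbacks you mention (``residue/restriction sequences'') would, if pursued, essentially reconstruct the paper's argument; the distinctive idea in your plan (Bogomolov stability) does not carry the weight you assign it.
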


\begin{proof}
Note that $K_{\overline{V}} = \overline{E}_1 + \overline{E}_2$. By Proposition~\ref{proposition:Esnault-Viehweg} we have an exact sequence
\begin{equation*}
0 \to \Omega_{\overline{V}}(\log(\Delta+\overline{E}_1))(\overline{E}_2) \to \Omega_{\overline{V}}(\log{\Delta})(\overline{E}_1+\overline{E}_2) \to \Omega_{\overline{E}_1}(\log{\Delta}|_{\overline{E}_1})(\overline{E}_1+\overline{E}_2) \to 0.
\end{equation*}
Since $\overline{E}_1^2 = -1$ and $\overline{E}_1\overline{E}_2=0$, we have $H^0(\Omega_{\overline{E}_1}(\log{\Delta}|_{\overline{E}_1})(\overline{E}_1+\overline{E}_2)) = 0$. Therefore it follows that
\begin{equation*}
H^0(\Omega_{\overline{V}}(\log{\Delta})(K_{\overline{V}})) = H^0(\Omega_{\overline{V}}(\log(\Delta + \overline{E}_1))(\overline{E}_2)).
\end{equation*}
Applying Proposition~\ref{proposition:Esnault-Viehweg} again, we have an exact sequence
\begin{equation*}
0 \to \Omega_{\overline{V}}(\log(\Delta+\overline{E}_1+\overline{E}_2)) \to \Omega_{\overline{V}}(\log(\Delta+\overline{E}_1))(\overline{E}_2) \to \Omega_{\overline{E}_2}(\log(\Delta+\overline{E}_1)|_{\overline{E}_2})(\overline{E}_2) \to 0.
\end{equation*}
Since $H^0(\Omega_{\overline{E}_2}(\log(\Delta+\overline{E}_1)|_{\overline{E}_2})(\overline{E}_2)) = 0$, we have
\begin{equation}\label{equation:log(Delta)(KV)=log(Delta+KV)}
\begin{split}
H^0(\Omega_{\overline{V}}(\log{\Delta})(K_{\overline{V}})) &= H^0(\Omega_{\overline{V}}(\log(\Delta + \overline{E}_1))(\overline{E}_2)) \\
&= H^0(\Omega_{\overline{V}}(\log(\Delta + \overline{E}_1+\overline{E}_2))).
\end{split}
\end{equation}
Therefore we need to show that $H^0(\Omega_{\overline{V}}(\log(\Delta + \overline{E}_1+\overline{E}_2))) = 0$.

We now consider the following exact sequence:
\begin{equation*}
0 \to \Omega_{\overline{V}} \to \Omega_{\overline{V}}(\log(\Delta + \overline{E}_1 + \overline{E}_2)) \to \bigoplus_{i=1}^{7} \sheaf{O_{\overline{D}_i}} \oplus \bigoplus_{i=1}^{4} \sheaf{O_{\overline{S}_i}} \oplus \sheaf{O_{\overline{F}_1}} \oplus \bigoplus_{i=1}^{2} \sheaf{O_{\overline{E}_i}} \to 0.
\end{equation*}
Note that the connecting homomorphism
\begin{equation*}
\delta: \bigoplus_{i=1}^{7} H^0(\sheaf{O_{\overline{D}_i}}) \oplus \bigoplus_{i=1}^{4} H^0(\sheaf{O_{\overline{S}_i}}) \oplus H^0(\sheaf{O_{\overline{F}_1}}) \oplus \bigoplus_{i=1}^{2} H^0(\sheaf{O_{\overline{E}_i}}) \to H^1(\Omega_{\overline{V}})
\end{equation*}
is the first Chern class map. Since the intersection matrix consisting of the intersection numbers of $\overline{D}_i$ ($i=1,\dotsc,7$), $\overline{S}_j$ ($j=1,\dotsc,4$), and $\overline{F}_1$, $\overline{E}_k$ ($k=1,2$) is invertible, their images by the map $\delta$ are linearly independent. Therefore the map $\delta$ is injective. Furthermore $H^0(\Omega_{\overline{V}}) = 0$. Hence, we have
\begin{equation*}
H^0(\Omega_{\overline{V}}(\log(\Delta + \overline{E}_1+\overline{E}_2)))=0.
\end{equation*}
Therefore it follows from \eqref{equation:log(Delta)(KV)=log(Delta+KV)} that
\begin{equation*}
H^0(\Omega_{\overline{V}}(\log{\Delta})(K_{\overline{V}}))  = H^0(\Omega_{\overline{V}}(\log(\Delta + \overline{E}_1+\overline{E}_2))) = 0. \qedhere
\end{equation*}
\end{proof}

\begin{proof}[Proof of Proposition~\ref{proposition:H^2=0}]
Since $K_{\overline{V}} = \psi^{\ast}{K_V}$, it follows from the projection formula that
\begin{equation*}
\psi_{\ast}(\Omega_{\overline{V}}(\log{\Delta})(K_{\overline{V}})) = \psi_{\ast}(\Omega_{\overline{V}}(\log{\Delta}))(K_V).
\end{equation*}
On the one hand, by the choice of $\Delta$, we have
\begin{equation*}
\Omega_{V}(\log{D}) \subset \psi_{\ast}(\Omega_{\overline{V}}(\log{\Delta})).
\end{equation*}
Therefore there is an injection
\begin{equation*}
0 \to \Omega_{V}(\log{D})(K_V) \to \psi_{\ast}(\Omega_{\overline{V}}(\log{\Delta})(K_{\overline{V}})).
\end{equation*}
Hence it follows by Lemma~\ref{lemma:H^0(Delta)=0} that
\begin{equation*}
H^0(V, \Omega_{V}(\log{D})(K_V)) \subset H^0(\overline{V}, \Omega_{\overline{V}}(\log{\Delta})(K_{\overline{V}})) = 0. \qedhere
\end{equation*}
\end{proof}

\section{A symplectic $4$-manifold with $b_2^+=1$, $K^2=5$, and $\pi_1 = \mathbb{Z}/2\mathbb{Z}$}

In this section we construct a symplectic $4$-manifold with $b_2^+=1$, $K^2=5$, and $\pi_1 = \mathbb{Z}/2\mathbb{Z}$ using a rational blow-down surgery.

We consider again the Enriques surface $Y$ used in Section~\ref{section:Construction}. We blow up five times totally at the five marked points $\bullet$. We blow up again three times and four times at the two marked points $\bigodot$, respectively. We then get a surface $Z = \blup{Y}{12}$; Figure~\ref{figure:Z5}. There exist two disjoint linear chains of $\mathbb{CP}^1$'s in $Z$:
\begin{align*}
&C_{4,1}: \uc{-6}-\uc{-2}-\uc{-2} \\
&C_{151,31}: \uc{-5}-\uc{-8}-\uc{-6}-\uc{-2}-\uc{-3}-\uc{-2}-\uc{-2}-\uc{-2}-\uc{-2}-\uc{-2}-\uc{-3}-\uc{-2}-\uc{-2}-\uc{-2}
\end{align*}

\begin{figure}[hbtb]
\centering
\includegraphics{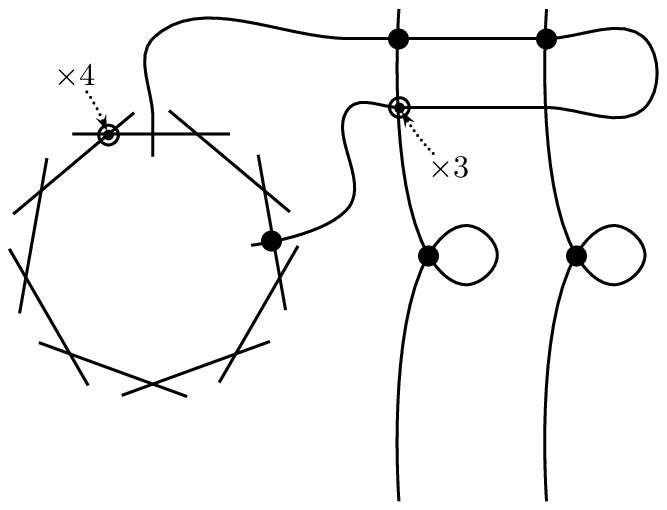}
\caption{An Enriques surface $Y$}
\label{figure:Y5}
\end{figure}

\begin{figure}[hbtb]
\centering
\includegraphics{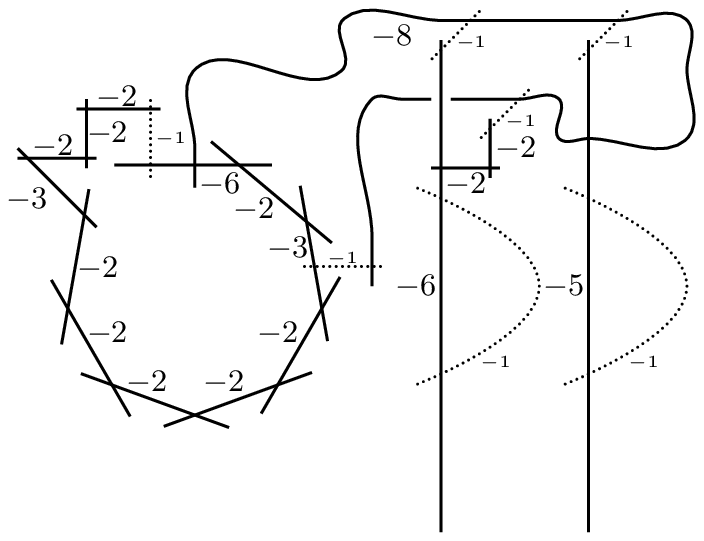}
\caption{A surface $Z = \blup{Y}{12}$}
\label{figure:Z5}
\end{figure}

We now perform a rational blow-down surgery of the surface $Z=\blup{Y}{12}$. By the results of Symington~\cite{Sym-1, Sym-2}, the rational blow-down $Z_{151,4}$ is a symplectic $4$-manifold. Thus we get a symplectic $4$-manifold $Z_{151,4}$ with $b_2^+=1$ and $K^2=5$. By applying a similar method in the proof of Proposition~\ref{proposition:pi_1=Z_2}, one can easily show that $\pi_1(Z_{151,4})=\mathbb{Z}/2\mathbb{Z}$.

\begin{theorem}
The rational blow-down $Z_{151,4}$ of the surface $Z$ in the construction above is a symplectic $4$-manifold with $b_2^+=1$, $K^2 =5$, and $\pi_1 = \mathbb{Z}/2\mathbb{Z}$.
\end{theorem}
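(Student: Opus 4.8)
The plan is to establish the four assertions in parallel with the earlier construction, borrowing Symington's work for the symplectic structure and the machinery of Proposition~\ref{proposition:pi_1=Z_2} for the fundamental group. First, $Z=\blup{Y}{12}$ is projective and hence symplectic, and the plumbing graphs of $C_{4,1}$ and $C_{151,31}$ are precisely the dual resolution graphs of the cyclic quotient singularities $\tfrac{1}{4^2}(1,4\cdot1-1)$ and $\tfrac{1}{151^2}(1,151\cdot31-1)$ --- one checks $16/3=[6,2,2]$ and $151^2/(151\cdot31-1)=22801/4680=[5,8,6,2,3,2,2,2,2,2,3,2,2,2]$. Since the two configurations are disjoint unions of symplectically embedded spheres meeting along these graphs, the generalized symplectic rational blowdown of Symington~\cite{Sym-1,Sym-2} applies and $Z_{151,4}$ carries a symplectic structure.

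Next I would pin down the characteristic numbers. Each configuration is replaced by a rational homology ball and the gluing is along a rational homology $3$-sphere, so the rational blow-down changes neither $b_1$ nor $b_2^+$; since $b_1(Y)=0$, $b_2^+(Y)=1$, and blowups preserve both, $b_1(Z_{151,4})=0$ and $b_2^+(Z_{151,4})=1$. For $K^2$ I would use the relation $K^2=2e+3\sigma$, valid on any closed symplectic $4$-manifold. From $e(Y)=12$ and $\sigma(Y)=-8$, the twelve blowups give $e(Z)=24$ and $\sigma(Z)=-20$; excising a negative-definite plumbing of $k$ spheres and gluing in its rational ball changes $e$ by $-k$ (additivity of the Euler characteristic) and $\sigma$ by $+k$ (Novikov additivity), so with $k=3+14=17$ we obtain $e(Z_{151,4})=7$, $\sigma(Z_{151,4})=-3$, and $K^2=2\cdot7+3\cdot(-3)=5$.

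For the fundamental group I would run the argument of Proposition~\ref{proposition:pi_1=Z_2} with the pair $(73,19)$ replaced by $(151,4)$. Decompose $Z=Z_0\cup(C_{4,1}\cup C_{151,31})$ and $Z_{151,4}=Z_0\cup(B_{4,1}\cup B_{151,31})$. Since $\pi_1(Y)=\mathbb{Z}_2$ and blowups preserve $\pi_1$, $\pi_1(Z)=\mathbb{Z}_2$, and Van Kampen gives $\mathbb{Z}_2=\pi_1(Z)=\pi_1(Z_0)/\langle N_{\gamma_1 i_\ast(\alpha)\gamma_1^{-1}},\,N_{\gamma_2 j_\ast(\beta)\gamma_2^{-1}}\rangle$, where $\alpha,\beta$ are the normal circles of suitable end curves of $C_{151,31}$ and $C_{4,1}$. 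As these two circles lie on a common $(-1)$-sphere, $\gamma_1 i_\ast(\alpha)\gamma_1^{-1}$ and $\gamma_2 j_\ast(\beta)\gamma_2^{-1}$ have the same order in $\pi_1(Z_0)$; but these orders divide $|\pi_1(\partial B_{151,31})|=151^2$ and $|\pi_1(\partial B_{4,1})|=16$ respectively, and $\gcd(151^2,16)=1$, so the common order is $1$, the normal subgroup they generate is trivial, and hence $\pi_1(Z_0)=\mathbb{Z}_2$. Gluing back the rational balls one at a time, $\pi_1(\partial B_{151,31})=\mathbb{Z}_{151^2}$ maps to $\overline{0}$ in $\pi_1(Z_0)=\mathbb{Z}_2$ and onto $\pi_1(B_{151,31})=\mathbb{Z}_{151}$, so $\pi_1(Z_0\cup B_{151,31})=\mathbb{Z}_2\ast_{\mathbb{Z}_{151^2}}\mathbb{Z}_{151}=\mathbb{Z}_2$, and likewise $\pi_1(Z_{151,4})=\mathbb{Z}_2\ast_{\mathbb{Z}_{16}}\mathbb{Z}_4=\mathbb{Z}_2$.

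The continued-fraction verifications and the Euler-number/signature bookkeeping are routine. The one step that genuinely requires care is the identification --- from Figure~\ref{figure:Z5} and the blowup data --- of the $(-1)$-sphere meeting each of $C_{4,1}$ and $C_{151,31}$ exactly once at an end curve, since this is what forces $\gamma_1 i_\ast(\alpha)\gamma_1^{-1}$ and $\gamma_2 j_\ast(\beta)\gamma_2^{-1}$ to have equal order; once that is in place, the coprimality of $151$ and $4$ closes the fundamental-group computation exactly as in Proposition~\ref{proposition:pi_1=Z_2}.
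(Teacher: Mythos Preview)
Your proposal is correct and follows essentially the same approach as the paper, which merely invokes Symington for the symplectic structure, states the numerical invariants, and refers back to the method of Proposition~\ref{proposition:pi_1=Z_2} for the fundamental-group computation. You have filled in the bookkeeping (continued fractions, $e$ and $\sigma$ under rational blowdown, the Van~Kampen argument with the coprime pair $(151,4)$) that the paper leaves implicit, and you correctly flag the one geometric input---the $(-1)$-sphere in Figure~\ref{figure:Z5} meeting both chains---on which the equal-order step depends.
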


\begin{remarks}
\begin{enumerate}[1.]
\item One can prove that the symplectic $4$-manifold $Z_{151,4}$ constructed in this section is minimal by using a technique in Ozsv\'{a}th and Szab\'{o}~\cite{Ozsvath-Szabo}.

\item It is an intriguing question whether the symplectic $4$-manifold $Z_{151,4}$ admit a complex structure. Since the cohomology $H^2 (T^0 _{X})$ is not zero in this case, it is hard to determine whether there exists a global $\mathbb{Q}$-Gorenstein smoothing. We leave this question for future research.

\item As a corollary, we can reconstruct a simply connected symplectic $4$-manifold with $b_2^+=3$ and $K^2=10$ from the symplectic $4$-manifold $Z_{151,4}$. We briefly sketch the construction. We consider the unramified double covering $\phi : \overline{Y} \to Y$ from the K3 surface $\overline{Y}$ to the Enriques surface $Y$ in the proof of Proposition~\ref{proposition:H^2=0}. Whenever we blow up $Y$ in the above construction of the surface $Z=\blup{Y}{12}$, we blow up twice $\overline{Y}$ at the preimages of $\pi$. Then we obtain an unramified double covering $\overline{\pi} : \overline{Z} \to Z$ from a complex surface $\overline{Z}=\blup{\overline{Y}}{24}$ which has four disjoint linear chains of $\mathbb{CP}^1$'s: two $C_{4,1}$'s and two $C_{151,31}$'s. We perform a rational blow-down surgery of the surface $\overline{Z}$. We then get a symplectic $4$-manifold $\overline{Z}_{151,4}$ with $b_2^+=3$ and $K^2=10$. Since $\pi_1(Z_{151,4}) = \mathbb{Z}/2\mathbb{Z}$ and there is an unramified double covering $\overline{Z}_{151,4} \to Z_{151,4}$ induced by the double covering $\pi : \overline{Z} \to Z$, the symplectic $4$-manifold $\overline{Z}_{151,4}$ is simply connected.
\end{enumerate}
\end{remarks}


\begin{thebibliography}{99}
\bibitem[1]{Barlow} R. Barlow, \textit{A simply connected surface of general type with $p\sb g=0$}, Invent. Math. \textbf{79} (1985), no. 2, 293--301.

\bibitem[2]{BHPV} W. Barth, K. Hulek, C. Peters, and A. Van de Ven,                  \textit{Compact complex surfaces}, 2nd ed. Springer-Verlag, 2004.

\bibitem[3]{Cartwright-Steger} D. Cartwright and T. Steger, a personal communication, 2008.

\bibitem[4]{Esnault-Viehweg} H. Esnault and E. Viehweg, \textit{Lectures on vanishing theorems}, DMV Seminar \textbf{20}, Birkhauser Verlag, Basel, 1992.

\bibitem[5]{Flenner-Zaidenberg} H. Flenner and M. Zaidenberg, \textit{$\mathbb{Q}$-acyclic surfaces and their deformations}, Contemp. Math. \textbf{162} (1994), 143--208.

\bibitem[6]{Keum-Lee} J. Keum and Y. Lee, \textit{Surfaces of general type with $p_g=0$, $\pi_1=\mathbb{Z}/2\mathbb{Z}$, and $K^2=1,2,3$}, preprint.

\bibitem[7]{Kondo} S. Kondo, \textit{Enriques surfaces with finite automorphism groups}, Japan. J. Math. (N.S.) \textbf{12} (1986), no. 2, 191--282.

\bibitem[8]{Lee-Park-H1=Z2} Y. Lee and J. Park, \textit{A complex surface of general type with $p_g=0, K^2=2$ and $H_1=\mathbb{Z}/2\mathbb{Z}$}, Math. Res. Lett. \textbf{16} (2009), 323--330.

\bibitem[9]{Lee-Park-K^2=2} Y. Lee and J. Park, \textit{A simply connected surface of general type with $p_g=0$ and $K^2=2$}, Invent. Math. \textbf{170} (2007), 483--505.

\bibitem[10]{Ozsvath-Szabo}  P. Ozsv\'{a}th and Z. Szab\'{o}, \textit{On Park's exotic smooth four-manifolds}, Geometry and Topology of Manifolds, Fields Institute Communications \textbf{47} (2005), 253--260, Amer. Math. Soc., Province, RI.

\bibitem[11]{PPS-K3} H. Park, J. Park and D. Shin, \textit{A simply connected surface of general type with $p_g=0$ and $K^2=3$}, Geom. Topol. \textbf{13} (2009), no. 2, 743--767.

\bibitem[12]{PPS-K4} H. Park, J. Park and D. Shin, \textit{A simply connected surface of general type with $p_g=0$ and $K^2=4$}, Geom. Topol. \textbf{13} (2009), no. 3, 1483--1494.

\bibitem[13]{Sym-1} M. Symington, \textit{Generalized symplectic rational blowdowns}, Algebr. Geom. Topol. \textbf{1} (2001), 503--518.

\bibitem[14]{Sym-2} M. Symington, \textit{Symplectic rational blowdowns}, J. Differential Geom. \textbf{50} (1998), 505--518.
\end{thebibliography}
\end{document}